\newtheorem{theorem}{Theorem}[section]
\newtheorem{lemma}[theorem]{Lemma}
\newtheorem{proposition}[theorem]{Proposition}
\theoremstyle{definition}
\newtheorem{definition}[theorem]{Definition}
\newtheorem{remark}[theorem]{Remark}
\newtheorem{assumption}[theorem]{Assumption}
\title[Deep learning enhanced cost-aware multi-fidelity UQ]
{Deep learning enhanced cost-aware multi-fidelity uncertainty quantification of a computational model for radiotherapy} 
\author[Piermario Vitullo and Nicola Rares Franco and Paolo Zunino]{}
\subjclass{Primary: 65L60, 68T07, 65N22, 65M22, 65Z05.}
\keywords{multi-fidelity,
uncertainty quantification,
reduced order modeling,
deep learning,
neural networks,
radiotherapy.}
\thanks{$^*$Corresponding author: Paolo Zunino}
\begin{document}
\maketitle

\centerline{\scshape
Piermario Vitullo$^{1}$, Nicola Rares Franco$^{1}$
and Paolo Zunino$^{{\href{mailto:paolo.zunino@polimi.it}{\textrm{\Letter}}}*1}$}

\medskip

{\footnotesize
 \centerline{$^1$MOX, Department of Mathematics, Politecnico di Milano, Italy}
} 



\bigskip

 \centerline{(Communicated by Handling Editor)}


\begin{abstract}

Forward uncertainty quantification (UQ) for partial differential equations is a many-query task that requires a significant number of model evaluations. The objective of this work is to mitigate the computational cost of UQ for a 3D-1D multiscale computational model of microcirculation. To this purpose, we present a deep learning enhanced multi-fidelity Monte Carlo (DL-MFMC) method that integrates the information of a multiscale full-order model (FOM) with that coming from a deep learning enhanced non-intrusive projection-based reduced order model (ROM). The latter is constructed by leveraging on proper orthogonal decomposition (POD) and mesh-informed neural networks (previously developed by the authors and co-workers), integrating diverse architectures that approximate POD coefficients while introducing fine-scale corrections for the microstructures. The DL-MFMC approach provides a robust estimator of specific quantities of interest and their associated uncertainties, with optimal management of computational resources. In particular, the computational budget is efficiently divided between training and sampling, ensuring a reliable estimation process suitably exploiting the ROM speed-up. Here, we apply the DL-MFMC technique to accelerate the estimation of biophysical quantities regarding oxygen transfer and radiotherapy outcomes. Compared to classical Monte Carlo methods, the proposed approach shows remarkable speed-ups and a substantial reduction of the overall computational cost.

\end{abstract}


\def\eg{\textit{e.g.}\ }
\def\ie{\textit{i.e.}\ }
\def\argmin{\mathrm{argmin}}
\def\esssup{\mathrm{ess\,sup}}
\def\I{\boldsymbol{\mathrm{I}}}
\def\Tau{\mathcal{T}}
\def\domega{\partial \Omega}
\def\RR{\mathbb{R}}
\def\NN{\mathbb{N}}
\def\QQ{\mathbb{Q}}
\def\VV{\mathbb{V}}
\def\WW{\mathbb{W}}
\def\SS{\mathbb{S}}
\def\ann{\mathcal{N}}
\def\lnn{\mathcal{L}}
\def\minn{\mathcal{M}}
\def\wwmic{\boldsymbol{\widetilde{\theta}}_{g}}
\def\uu{\mathbf{u}_{\boldsymbol{\mu}}}
\def\qoi{Q_{\mathbf{u}}}
\def\qfom{Q_{\mathbf{u}}^{(FOM)}}
\def\qrom{Q_{\mathbf{u}}^{(ROM)}}
\def\para{\boldsymbol{\mu}}
\def\inlets{\boldsymbol{\eta}}
\def\dist{\mathbf{d}}
\newcommand{\nmax}{n_{\max}}
\def\nnmax{\left\lfloor \frac{p}{w_0} \right\rfloor}
\def\nnnmax{\left\lfloor p/w_0 \right\rfloor}
\newcommand{\bct}{{C}_t}
\newcommand{\bp}{{p}}
\def\uu{\mathbf{u}}
\def\MSE{\textnormal{MSE}}
\def\uph{u_{\para}^{\text{FOM}}}
\def\uuph{\mathbf{u}_{\para}^{\text{FOM}}}
\def\urom{\mathbf{u}_{\para}^{\text{ROM}}}
\def\uromm{\mathbf{u}^{\text{ROM}}}
\def\urommj{\mathbf{u}^{\text{ROM}_{j}}}
\def\ufom{\mathbf{u}^{\text{FOM}}}
\def\ifom{I_{\textnormal{FOM}}^{\gamma}}
\def\imfmc{I_{\textnormal{DL-MFMC}}^{\gamma}}
\def\survfraction{S_f}
\def\OER{\text{OER}}
\def\TCP{\text{TCP}}
\def\regularization{\xi}

\newcommand{\betareg}{\tau}
\newcommand{\alphareg}{\zeta}
\newcommand{\conflevel}{\gamma}
\newcommand{\alphaox}{\alpha_{t}} 
\newcommand{\alphaMC}{\lambda}
\newcommand{\estimator}{\hat{E}_n^{\textnormal{DL-MFMC}}}
\newcommand{\mse}{\textnormal{MSE}}
\newcommand{\MFMCestimator}{\hat{E}_{m_{0}, m_{1}}^{\textnormal{MFMC}}}
\newcommand{\DLMFMCestimator}{\hat{E}^{\textnormal{DL-MFMC}}}
\newcommand{\MFMCestimatorOpt}{\hat{E}_{m_{0}^*, m_{1}^*}^{\textnormal{MFMC}}}
\newcommand{\FOMestimator}{\hat{E}^{\textnormal{FOM}}}
\newcommand{\variance}{\textnormal{Var}}
\newcommand{\covariance}{\textnormal{Cov}}
\newcommand{\gcost}{g}
\newcommand{\train}{\textnormal{train}}
\newcommand{\nstar}{n_*}
\section{Introduction}
\label{sec:intro}

Uncertainty quantification (UQ) is a key aspect of computational modeling \cite{smith2013}. The challenge of UQ becomes even more pronounced in the context of life sciences, where the complexity of physical models is combined with the high uncertainties of the data \cite{Croci2021}. A particular case of this broad scenario, and the focus of this work, is the problem of microcirculation and oxygen transfer in biological tissues, here addressed using the multiscale model proposed in \cite{Possenti20213356}.
Quantifying how uncertainties propagate through these models, which are governed by complex partial differential equations (PDEs), requires extensive computational resources. In fact, to obtain robust and reliable estimates, numerous simulations are required, which means that the numerical solver has to be queried multiple times (a so-called \textit{many-query} scenario).
For this reason, traditional UQ methods that rely solely on high-fidelity simulations are often impractical, especially when it comes to multiscale models, where different scales need to be resolved accurately, and parametric data are high-dimensional \cite{villa2022}.

To address these issues, the state-of-the-art in UQ has been evolving rapidly, with multi-fidelity methods emerging as a promising approach to mitigate the computational burden \cite{peherstorfer2018survey}.
Simply put, multi-fidelity methods are based on the concept of combining high-fidelity models with one or more lower-fidelity models. The idea is that, being computationally cheaper but less accurate, lower-fidelity models can be used to inform and accelerate the computation performed by the high-fidelity model \cite{peherstorfer2018survey, ng2014multifidelity, guo2022multi, conti2022multi}. Furthermore, the applicability of multi-fidelity techniques has been further enhanced by recent advances in the reduced-order modeling literature, a research field devoted to the development of suitable surrogate models encompassing accuracy and efficiency \cite{benner2015,hesthaven2015certified,QuartManzNegri}. In particular, a major contribution along this direction has been provided by nonintrusive techniques based upon interpolation and regression algorithms \cite{alsayyari2019nonintrusive, amsallem2012nonlinear, audouze2013nonintrusive, barrault2004empirical, berzicnvs2020standardized, chen2018greedy, hesthaven2018non, gao2021non, FrancoROMPDE, wang2019non}.  In general, these approaches have gained traction in UQ due to its ability to significantly reduce computational time while maintaining acceptable levels of accuracy and robustness \cite{chen2017reduced, cicci2023reduced}.

Over the years, multi-fidelity methods have evolved from simple model hierarchies to more sophisticated techniques that intelligently balance the trade-off between computational cost and accuracy. One such technique is the multi-fidelity Monte Carlo (MFMC), which, complemented by a suitable strategy for optimal management of computational resources, has shown great promise in efficiently estimating the statistics of model outputs under uncertainty \cite{peherstorfer2016optimal}. This methodology, provides a comprehensive framework for combining an arbitrary number of surrogate models while also balancing offline and online costs. Scientifically speaking, its derivation is particularly innovative, as it moves away from the traditional reliance on error decay rates. Instead, it introduces an optimization problem that accounts for both errors and costs, acknowledging the fact that ROMs require a training phase in order to be operational. The paper demonstrates that, under certain mild conditions, the optimization problem admits a unique analytic solution, thus providing a practical way to manage the computational budget.

In this work, we contribute to the evolving landscape of MFMC methods by extending the ideas in \cite{peherstorfer2016optimal} to the case of deep learning-based ROMs, with a strong emphasis on our motivating application in relation to a multiscale computational model for microcirculation. More precisely, referring to the comprehensive model presented in \cite{Possenti2019b,Possenti20213356}, we address oxygen transfer from microvessels to interstitial tissue. This model provides a description of the oxygen field in the microvascular environment, which directly affects the performance of radiotherapy: see, e.g., \cite{Possenti2023.10.16.562646} and references therein.

Our approach integrates a full-order model (FOM) with a non-intrusive projection-based reduced order model (ROM) enhanced by deep learning, previously developed by the authors in \cite{Vitullo2024}. Simply put, the latter integrates proper orthogonal decomposition (POD) with mesh-informed neural networks \cite{FrancoMINN}, effectively relying on a closure modeling technique that resembles a separation of scales approach. This combination not only reduces the computational cost, but also retains the essential features captured by the FOM, spanning both local and global scales. We term our deep learning-enhanced MFMC approach as DL-MFMC. 

Building upon \cite{peherstorfer2016optimal}, we construct the DL-MFMC approach in order to account for two additional sources of computational cost: on the one hand, the training of the neural network architectures; on the other hand, the computational effort required to generate the input data, which, in microcirculation studies can be non-negligible. 
By applying our method to the estimation of statistics related to oxygen transfer and radiotherapy in microcirculation, we demonstrate its effectiveness in performing robust and reliable UQ analysis in a computationally efficient manner.
In this sense, our work not only contributes to the development of UQ multi-fidelity methods, but it also addresses a specific challenge in the context of microcirculation and radiation therapy.

The paper is organized as follows. First, in Section \ref{sec:dlmfmc} we set the notation and present the DL-MFMC approach in full generality, discussing the construction of our multi-fidelity estimator and the corresponding optimal management of computational resources. Then, in Section \ref{sec:microcirc} we dive into the details of our motivating application, presenting the FOM, the ROM, and the quantities of interest that we wish to estimate. Finally, we devote Section \ref{sec:results} to the numerical experiments and draw the corresponding conclusions in Section \ref{sec:conclusions}.
\section{A deep learning enhanced multi-fidelity Monte Carlo estimator}
\label{sec:dlmfmc}

In this section, we briefly review two different approaches to estimate the statistics of specific quantities of interest and provide suitable confidence intervals.

\subsection{Problem setup}
\label{sec:setup}
We are given a parametrized partial differential equation (PDE) of the form 
\begin{equation}\label{eq:problem}
    \begin{cases}
    L_{\para} u_{\para} = f_{\para} & \text{in} \ \Omega,
    \\
    B_{\para} u_{\para}= g_{\para} & \text{on} \ \domega,
\end{cases}
\end{equation}
where $L_{\para}$ denotes a (semilinear, second order) elliptic operator, whereas $B_{\para}$ is a boundary operator exemplifying Dirichlet, Neumann or Robin conditions applied to $u_{\para}$ on $\domega$, including problems with mixed-type conditions. Clearly, the solution $u=u_{\para}$ to \eqref{eq:problem} depends on the parameter vector $\para$. Here, we assume $\para$ to be taking values in some parameter space $\mathcal{P} \subset \mathbb{R}^d$, endowed with suitable probability distribution $\mathbb{P}$ modeling uncertainties.

At the continuous level, we assume that the solution to \eqref{eq:problem} can be sought within a given Hilbert space $V$. Our main goal is to quantify how uncertainties in the model parameters can propagate through the PDE, ultimately affecting certain outputs of interest. To this end, let us fix the notation and formally introduce a (nonlinear) functional
$$Q: V\to\mathbb{R},$$
representing the quantity of interest (QoI). Then, our objective is to come up with an efficient strategy for computing the average response, that is, $\mathbb{E}_{\para\sim\mathbb{P}}\left[Q(u_{\para})\right]$.
\\\\
As a first step, we assume that a suitable high-fidelity discretization of \eqref{eq:problem} is available (the so-called FOM). For simplicity, we shall assume that the latter consists of a Galerkin projection of \eqref{eq:problem} onto a Finite Element (FE) space $V_h\subset V$ of dimension $N_h=\mathrm{dim}(V_h)$. As in the continuous case, the FOM defines a map $\mathcal{P} \, \mapsto \, V_h$ mapping parameters onto FOM solutions $\para\mapsto\uph$.
From the discrete standpoint, the FOM is equivalent to a (large) system of algebraic equations of the form
\begin{equation}   
    \label{eq:algebraic}
    \mathbf{A}_{\para}\uuph = \mathbf{f}_{\para}
\end{equation}
where $\uuph \in \mathbb{R}^{N_h}$ is the vector of degrees of freedom in the FE approximation. With little abuse of notation, we shall write $Q(\uuph)$ to intend $Q(\uph)$, that is: we identify $Q$ with its discrete counterpart, so as to directly operate with vectors rather than functions.
\\\\
From a theoretical point of view, the output of interest, namely $\mathbb{E}_{\para\sim\mathbb{P}}\left[Q(\uuph)\right]$, can be estimated by classical Monte Carlo sampling. However, this approach would require solving \eqref{eq:algebraic} multiple times, resulting in a massive consumption of computational resources. For this reason, we resort to multi-fidelity strategies relying upon ROMs. Mathematically speaking, the ROM is a computational unit that acts as a suitable surrogate of the FOM, that is $$\text{ROM}:\;\para\mapsto\urom\quad\quad\text{with}\quad\quad\urom\approx\uuph.$$
Here, we assume that the ROM consists of several neural network architectures, all trained within a supervised learning framework. In other words, to be operational, the ROM requires a preliminary training phase, which is conducted on a selected collection of labeled FOM simulations, $\{(\para_{i},\ufom_{i})\}_{i=1}^{n}$, where $\ufom_{i}:=\ufom_{\para_{i}}.$ 

In general, the idea is to train a ROM and construct a multi-fidelity estimator of the QoI, named $\MFMCestimator$ and defined later on, by combining $m_{0}$ high-fidelity simulations, which are accurate but expensive, with $m_{1}$ low-fidelity simulations, computed via the ROM, which are cheaper to evaluate but less accurate. Clearly, the whole procedure must be carried out in a suitable way that ensures an actual reduction in the overall computational cost. To better appreciate this, let $w_{0}$ and $w_{1}$ be the computational times associated with the FOM and ROM simulations, respectively. Assume that the ROM is trained on $n$ high-fidelity samples, whereas the multi-fidelity estimator is constructed using $m_{0}$ evaluations of the FOM and $m_{1}$ evaluations of the ROM. Then, the overall computational cost is
$$nw_{0}+t(n)+m_{0}w_{0}+m_{1}w_{1},$$
where $t=t(n)$ is the training time associated with the ROM. In contrast, a classical Monte Carlo estimator $\FOMestimator{N}$, constructed using $N$ FOM simulations, entails a computational cost of $Nw_{0}.$ Assuming the case of unbiased estimators, the uncertainties in the two estimates can be computed as
$$\variance(\MFMCestimator)\quad\quad\text{and}\quad\quad\variance(\FOMestimator{N}).$$
It is then clear that, given a computational budget $p$, a multi-fidelity approach would only advantageous if for $p=nw_{0}+t(n)+m_{0}w_{0}+m_{1}w_{1}=Nw_{0}$, one has $\variance(\MFMCestimator)<\variance(\FOMestimator{N})$.
We note that the computational cost (generally measured in terms of floating-point operations) and the CPU time (measured in clock ticks or in seconds and highly dependent on the machine architecture) are considered here to be mutually proportional and used interchangeably.

Our purpose is to obtain a reduction of computational cost by proposing a suitable strategy for optimal management of the computational resources, together with an explicit multi-fidelity estimator that leverages on the deep learning nature of the ROM. Before doing so, however, it is worth recalling some of the basic ideas underlying Monte Carlo and multi-fidelity Monte Carlo methods, such as confidence intervals and mean-square-error metrics.


\subsection{Monte Carlo uncertainty quantification}
\label{sec:mc-fom}
We start by recalling the standard Monte Carlo estimator for the mean of a QoI. Given a computational budget $p$, the latter is computed using FOM simulations as
\begin{equation}
\FOMestimator{N}= \frac{1}{N} \sum_{i=1}^{N}Q(\ufom_{\para_{i}}),
\label{eq:MC-FOM}    
\end{equation}
where $N$ is the number of high-fidelity evaluations, $N=\nnnmax$, with $w_0$ being the cost of a single FOM evaluation. Here, we assume that the realizations of the input parameters $\para_{1},\dots\para_{N}$ are drawn independently and according to $\mathbb{P}$.

By definition, the FOM estimator is unbiased, in the sense that
\begin{equation}
    \label{eq:unbiased}
    \mathbb{E}\left[\FOMestimator{N}\right] = \mathbb{E}\left[Q\left(\ufom_{\para}\right)\right].
\end{equation}
Note that here the expected value on the right-hand-side is taken with respect to $\para\sim\mathbb{P}.$ Conversely, the one on the left is taken with respect to all possible outcomes of the sampling procedure, that is, $\para_{1},\dots,\para_{N}\sim\mathbb{P}\otimes\dots\otimes\mathbb{P}$.

As a direct consequence of \eqref{eq:unbiased}, the mean-squared-error (MSE) of the estimator 
\begin{equation}
    \label{eq:mse}
    \MSE(\FOMestimator{N}):=\mathbb{E}\left|\FOMestimator{N}-\mathbb{E}\left[Q(\ufom_{\para})\right]\right|^{2},
\end{equation}
is equal to the variance of the estimator. Due to independence, the latter is
\begin{equation}
\label{eq:variance}
\variance(\FOMestimator{N}) = \variance\left(\frac{1}{N} \sum_{i=1}^{N} Q(\ufom_{\para_{i}})\right) = \frac{1}{N^2}\sum_{i=1}^{N} 
                      \sigma_0^2 = \frac{\sigma_0^2}{N},
\end{equation}
where $\sigma_{0}^{2}:=\variance\;Q(\ufom_{\para})$ is the variance of the QoI, taken with respect to $\para\sim\mathbb{P}.$ In practice, $\sigma_{0}^{2}$ is usually estimated as 
\begin{equation}
\label{eq:estimatedvariance}
\widehat{\sigma}_0^2: = \frac{1}{N-1}\sum_{i=1}^{N}\left(\FOMestimator{N}-Q(\ufom_{\para_{i}})\right)^{2}.
\end{equation}
Using classical results from the theory of statistical estimators, these considerations can be exploited to construct confidence intervals, which are naturally ways to quantify uncertainties of pointwise estimates, such as \eqref{eq:MC-FOM}. We report a precise definition below. 

\begin{definition}[{\bf MC-FOM confidence interval}]
\label{def:ic-mc-fom}
Let $\para_{1},\dots,\para_{N}$ be $N$ random independent realizations of $\para\sim\mathbb{P}.$ Let $Q$ be a given quantity of interest. Fix a confidence level $\gamma\in(0,1)$. The MC-FOM confidence interval of level $\gamma$, $\ifom\subset\mathbb{R}$, is
\begin{equation}
    \label{eq:IC-MC-FOM}
\ifom:=\FOMestimator{N}\pm\mathrm{t}_{\frac{1-\gamma}{2},N}\sqrt{\frac{\widehat{\sigma}_0^2}{N}},\end{equation}
where $\FOMestimator{N}$ and $\widehat{\sigma}_0^2$ are as in Eq. \eqref{eq:MC-FOM} and \eqref{eq:estimatedvariance}, respectively. Here, $\mathrm{t}_{\alpha,q}$ denotes the quantile of the level $1-\alpha$ of a $t$-student distribution with $q$ degrees of freedom.
\end{definition}
The MC-FOM confidence interval provides a better estimate of the QoI, as it enriches the estimate in \eqref{eq:MC-FOM} with a quantification of uncertainties. The confidence level, $\gamma$, is related to how \textit{conservative} we want our estimate to be: the higher $\gamma$, the larger the interval (since we want to be \textit{more confident} about the fact that the interval captures the actual ground truth $\mathbb{E}[Q(\ufom_{\para})]$). More precisely, the formula in \eqref{eq:IC-MC-FOM} is constructed in such a way that
$$\text{Prob}\left(\mathbb{E}[Q(\ufom_{\para})]\in \ifom\right)\approx \gamma,$$
where $\text{Prob}=\mathbb{P}\otimes\dots\otimes\mathbb{P}$ is the joint probability distribution of the random sample, encoding the stochasticity of the confidence interval.

For a fixed confidence level, \eqref{eq:IC-MC-FOM} clearly shows that the uncertainty in the estimate decreases as a function of the FOM samples $N$. However, the decay is fairly slow, $\sim N^{-1/2}$. Consequently, a robust estimate may require a large number of FOM simulations. If the computational budget $p$ is limited and the cost of a single simulation $w_{0}$ is high, this approach may not be feasible.

\subsection{Multi-fidelity Monte Carlo estimator}
\label{sec:mfmc}

The driving idea behind multi-fidelity Monte Carlo (MFMC) is reduce the uncertainties in the final estimate by integrating Eq. \eqref{eq:MC-FOM} with some additional information, correlated with the QoI, but cheaper to compute. Here, this is achieved by relying on the ROM. For now, let us assume that the ROM is already available and fully operational (no training required).
Following our notation in Section \ref{sec:setup}, let $w_{0}$ be the computational time of a FOM simulation, and let $w_{1}$ be that of a ROM simulation. Let $$\para_{1},\dots,\para_{m_{0}},\;\para_{m_{0}+1},\dots,\para_{m_{1}},$$ be $m_{1}$ independent realization of the input parameter $\para\sim\mathbb{P},$ where $m_{0}<m_{1}.$ 
The MFMC approach is based on the following (unbiased) estimator, 
\begin{equation}
\label{eq:MFMC} 
\MFMCestimator := \FOMestimator{m_{0}} + \lambda \left( \hat{E}_{m_1}^{\text{ROM}} - \hat{E}_{m_0}^{\text{ROM}}  \right),   
\end{equation}
where $\FOMestimator{m_{0}}$ is as in Equation \eqref{eq:MC-FOM}, whereas
$\hat{E}_{m_k}^{\text{ROM}}:= m_{k}^{-1} \sum_{i=1}^{m_{k}}Q(\uromm_{\para_{i}})
$
is the ROM counterpart of the FOM estimator. Here, $\lambda>0$ is a suitable coupling parameter that regulates the impact of the ROM correction over the FOM estimate.

Note that, while we sampled a total of $m_{1}$ inputs, only $m_{0}$ of those were elaborated by the FOM (and the ROM). Instead, the remaining $m_{1}-m_{0}$ were only processed by the ROM. Thus, the overall computational cost of this procedure is $m_{0}w_{0}+m_{1}w_{1}$. From a UQ perspective, we also note that
\begin{equation}
    \label{eq:mse-mfmc}
    \MSE\left(\MFMCestimator\right) = \frac{\sigma_0^2}{m_0} + \left( \frac{1}{m_0} - \frac{1}{m_1} \right) (\lambda^2 \sigma_1^2 - 2 \lambda \rho \sigma_1 \sigma_0)
\end{equation}
where $$\sigma_{1}:=\variance\left(Q(\urom)\right)\quad\text{and}\quad\rho=\frac{\covariance\left(Q(\ufom), Q(\urom)\right)}{\sigma_{0}\sigma_{1}}.$$
In other words, the uncertainty associated to \eqref{eq:MFMC} depends on $m_{0},m_{1},\lambda,\sigma_{0},\sigma_{1}$ and $\rho.$ In general, given a computational budget $p$, the idea of the MFMC approach is to choose $m_{0},m_{1}$ and $\lambda$ by minimizing the uncertainty in \eqref{eq:mse-mfmc}, subject to the constraints $0\le m_{0}\le m_{1}$ and $m_{0}w_{0}+m_{1}w_{1}=p.$

In practice, this results in an optimization problem, whose optimal solution, $m_{0}^{*},m_{1}^{*},\lambda^{*}$, is known in closed form (at least under suitable mild assumptions: cf. \cite{peherstorfer2016optimal}). Notably, the optimal coupling parameter turns out to be $\lambda^{*}=\rho\sigma_{0}/\sigma_{1}$, meaning that $\lambda^{*}\neq0$ whenever $\rho\neq0.$ Since the FOM and the ROM are typically correlated, this suggests favoring MFMC over MC-FOM. Indeed, if the ROM is sufficiently cheap to evaluate, it can be shown that, compared to a naive MC-FOM estimator using all computational resources for FOM simulations, i.e. with $N=\nnnmax$, the optimal MFMC estimator entails a lower MSE (and thus, a lower uncertainty). For the interested reader, we refer to \cite[Corollary 3.5]{peherstorfer2016optimal}. Here, it is only worth mentioning that the MSE of the optimal MFMC estimator reads
\begin{equation}
    \label{eq:optmse}
        \MSE(\MFMCestimatorOpt) = \frac{\sigma_0^2}{p} \left( \sqrt{w_0 (1- \rho^2)} + \sqrt{w_1 \rho^2} \right)^2
\end{equation}
This property will come in handy for our construction in Section \ref{sec:dlmfmc}.
\\\\
While fascinating, this analysis has a major limitation. In fact, by taking the ROM for granted, it completely ignores the \textit{offline} cost regarding the construction and the training of the ROM. In fact, in order to be operational, ROMs typically necessitate of a preliminary training phase, in which they learn to approximate FOM simulations. Thus, part of the computational budget must be devoted to the generation of a suitable training set. 

In the literature, this fact was first acknowledged by Farcaș et al. in \cite{farcas2023context}. There, the authors noted that, if the ROM is trained on $n$ FOM simulations:
\begin{itemize}
    \item[i)] the remaining budget for a MFMC routine is $p-nw_{0};$
    \item[ii)] the quality of the ROM, especially in terms of QoI correlations, may depend on $n$.
\end{itemize}
Consequently, an optimal management of the computational resources requires a careful understanding of how $n$ enters into the equations. In \cite{farcas2023context}, by leveraging on correlation bounds, the authors characterize the optimal training size $n=\nstar$ as the solution to a suitable optimization problem involving the MSE of the MFMC estimator.

Here, instead of diving into details of the approach proposed by Farcaș et al., we directly present our adaptation to Deep Learning based ROMs. Compared to \cite{farcas2023context}, our analysis exhibits two major differences. First of all, aside from the generation of the training data, we also include the training time $t=t(n)$ within the offline costs, acknowledging the fact that some of the computational resources must be devoted to the actual training of the neural network architectures. Secondly, we account for the fact that, in complex applications, even the sampling of the input parameters may be computationally demanding. Finally, since Deep Learning based ROMs are extremely efficient, we set $w_{1}\equiv0$: that is, we assume ROM evaluations to have a negligible computational cost.

\subsection{A deep learning enhanced multi-fidelity estimator}
We are now ready to present our deep multi-fidelity estimator (DL-MFMC), specifically tailored for non-intrusive Deep Learning based ROMs. Similarly to the MFMC approach, our objective is to obtain a reduction of the uncertainties while also providing an optimal management of the computational resources. We articulate our presentation into three steps: i) definition of the DL-MFMC estimator, ii) formulation of the optimization problem, and iii) implementation of the optimal policy and construction of confidence intervals. 



\subsubsection{Notation}

As for the previous Sections, we assume to have access to a FOM, capable of producing high-quality simulations. In the case of complex multiscale systems, such as the microcirculation, oxygen transfer and radiotherapy model, a FOM simulation consists of two steps: the sampling of the parametric scenario and the consequent evaluation of the FOM solver. In general, the first step may entail a nonnegligible computational cost. For example, in the case of oxygen transfer models, the sampling step may be concerned with the computational synthesis of anatomically realistic vascular networks, encoded as a metric graph. For this reason, we model the cost of a single FOM simulation as
$$\gcost+w_{0},$$
where $\gcost$ and $w_{0}$ are the \textit{generation} and the \textit{evaluation} costs, respectively. The first term, $\gcost$, is intrinsic to the complexity of the problem, while the second one, $w_{0}$, is specific of the FOM.

Similarly to Section \ref{sec:mfmc}, we also assume that a suitable ROM technique has been chosen. For example, it may consist of a collection of multiple neural network architectures interacting in a suitable way, such as in the POD-NN \cite{hesthaven2018non}, DL-ROM \cite{FrancoMINN} and POD-MINN approaches \cite{Vitullo2024}. 
As we already mentioned, to be operational, the ROM must be trained on a collection of FOM samples (the so-called \textit{offline} phase). We model the corresponding \textit{offline} cost as
$$n(\gcost+w_{0})+t(n),$$
where $n$ is the number of FOM simulations in the training set. The first term, $n(\gcost+w_{0})$, is the sampling time, which corresponds to the generation of the training data. The second term, instead, corresponds to the actual training of the neural network architectures, here modeled through some non-decreasing monotone map $t:\NN_0\to\RR^+$. Instead, after training, the computational cost of a ROM simulation is $\gcost+w_{1}$ (input generation plus ROM evaluation). Here, we set $w_{1}\equiv0$ to emphasize the fact that online evaluations of Deep Learning-based ROMs can be carried out at negligible cost.

\subsubsection{Optimal training costs management}
\label{sec:optimal-policy}
We now reformulate the optimization problem proposed by Farcaș et al. in \cite{farcas2023context}, by adapting it to our framework. Let $p\in\RR_+$ be a computational budget.
We note that if we were to train a ROM on $n$ FOM simulations, we would be left with a computational budget $p_{n}=p-(\gcost+w_{0})n-t(n).$ At the same time, however, we would also have access to a trained ROM, available for multi-fidelity estimation. Specifically, we could leverage the MFMC paradigm, outlined in Section \ref{sec:mfmc}, to optimally utilize the remaining resources. In practice, for a fixed $n$, this procedure would bring us to the following estimator
\begin{equation}
\label{eq:DL-MFMC-n} 
\DLMFMCestimator_n := \FOMestimator{m_{0}^*(n)} + \lambda^*(n) \left( \hat{E}_{m_1^*(n)}^{\text{ROM}} - \hat{E}_{m_0^*(n)}^{\text{ROM}}  \right),
\end{equation}
where $m_0^*(n)$, $m_1^*(n),$ and $\lambda^*(n)$ are the optimal policy associated the computational budget $p_n$. More precisely, 
$$\lambda^*(n)=\rho(n)\frac{\sigma_0}{\sigma_1(n)},$$
where $\rho=\rho(n)$ and $\sigma_1=\sigma_1(n)$ are the correlation between FOM and ROM QoIs, and the variance of ROM QoIs, respectively. Here, we allow both to depend on $n$, so as to account for the role played by the training procedure. Similarly,
\begin{multline*}
    m_0^*(n),\;m_1^*(n)=\\=\argmin_{m_{0},m_{1}}\;\;\left\{\frac{\sigma_0^2}{m_0} - \left( \frac{1}{m_0} - \frac{1}{m_1} \right)\rho^2(n) \sigma_0^2\quad\text{s.t.}\;\;
\begin{array}{l}
     0\le m_0\le m_1,  \\
     m_0w_0+m_1\gcost=p_n 
\end{array}\right\},
\end{multline*}
which is nothing but \eqref{eq:mse-mfmc} up to substituting $\lambda$ with $\lambda^*(n)$. Note that, differently from Section \ref{sec:mfmc}, the multi-fidelity sampling cost is now $m_0w_0+m_1g$, rather than $m_0w_0+m_1w_1$. In fact, the multi-fidelity routine entails: 
\begin{itemize}
    \item [i)] generating $m_1$ parameter instances $\mapsto$ cost: $m_1\gcost$;
    \item [ii)] evaluating the FOM on $m_0$ of such instances $\mapsto$ cost: $m_0w_0$;
    \item [iii)]  evaluating the ROM on all parameter instances, $\mapsto$ cost: negligible.
\end{itemize}  

\noindent By leveraging on the theory of MFMC estimators, one can then easily prove the following.
\begin{proposition}
    \label{prop:optimal-dl-mse-n}
    Let $p>0$ be a given budget and let $n\in\mathbb{N}_{+}$
    be an admissible training size such that $p-(g+w_0)n-t(n)>0.$ Assume that $w_0\rho^2(n)>\gcost(1-\rho^2(n))$. Then
    \begin{equation}
    \label{eq:optdlmse}
        \MSE(\DLMFMCestimator_n) = \frac{\sigma_0^2}{p-(g+w_{0})n-t(n)} \left( \sqrt{w_0 (1- \rho(n)^2)} + \sqrt{\gcost\rho(n)^2} \right)^2.
\end{equation}    
\end{proposition}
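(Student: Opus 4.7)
The plan is to recognize that once the training size $n$ is fixed, Proposition \ref{prop:optimal-dl-mse-n} collapses to an application of the standard MFMC optimal-allocation result recalled in \eqref{eq:optmse}, but with two bookkeeping modifications: the total budget is replaced by the residual budget $p_n := p-(\gcost+w_{0})n-t(n)$, and the roles of the high- and low-fidelity costs are played by $w_0$ and $\gcost$ respectively. Under the assumption $p_n>0$, the offline cost is a sunk cost and the problem reduces to a pure MFMC optimization over $(m_0,m_1,\lambda)$ with the correlation $\rho(n)$ and variance $\sigma_1(n)$ treated as fixed constants learned during training.

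The first step is to rewrite the per-sample cost bookkeeping described at the end of Section \ref{sec:optimal-policy}. Generating $m_1$ parameter instances costs $m_1 \gcost$, evaluating the FOM on $m_0$ of them adds $m_0 w_0$, and the ROM evaluations are free. Hence the DL-MFMC sampling cost is $m_0 w_0 + m_1\gcost$, which has the same algebraic structure as the standard MFMC sampling cost $m_0 w_0 + m_1 w_1$ upon the identification $w_1 \leftrightarrow \gcost$. The estimator $\DLMFMCestimator_n$ in \eqref{eq:DL-MFMC-n} is unbiased and, by the variance computation \eqref{eq:mse-mfmc} with $\lambda=\lambda^*(n)=\rho(n)\sigma_0/\sigma_1(n)$, has MSE
\begin{equation*}
\MSE(\DLMFMCestimator_n)=\frac{\sigma_0^2}{m_0}-\Bigl(\frac{1}{m_0}-\frac{1}{m_1}\Bigr)\rho(n)^2\sigma_0^2.
\end{equation*}

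The second step is to solve the resulting two-dimensional constrained optimization, namely minimize this quantity subject to $m_0 w_0+m_1 \gcost=p_n$ and $1\le m_0\le m_1$. Setting $r=m_1/m_0$ and eliminating $m_0=p_n/(w_0+r\gcost)$, the MSE becomes the one-variable function
\begin{equation*}
r\;\mapsto\;\frac{\sigma_0^2}{p_n}\,(w_0+r\gcost)\Bigl(1-\rho(n)^2+\frac{\rho(n)^2}{r}\Bigr),
\end{equation*}
whose unique stationary point is $r^*(n)=\sqrt{w_0\rho(n)^2/(\gcost(1-\rho(n)^2))}$. The hypothesis $w_0\rho(n)^2>\gcost(1-\rho(n)^2)$ is precisely what ensures $r^*(n)>1$, so that the feasibility constraint $m_0\le m_1$ is not active and the closed-form interior solution applies; this is the same nondegeneracy condition appearing in \cite{peherstorfer2016optimal}. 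Substituting $r^*(n)$ back, the two cross terms $w_0\rho(n)^2/r^*(n)$ and $r^*(n)\gcost(1-\rho(n)^2)$ both equal $\sqrt{w_0\gcost\rho(n)^2(1-\rho(n)^2)}$ by direct computation, so that the objective at optimum collapses into the perfect square
$\bigl(\sqrt{w_0(1-\rho(n)^2)}+\sqrt{\gcost\rho(n)^2}\bigr)^2$, which, once multiplied by $\sigma_0^2/p_n$, is exactly \eqref{eq:optdlmse}.

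The main obstacle is conceptual rather than computational: the cost structure in the deep-learning setting does not line up with the Peherstorfer--Willcox--Gunzburger framework in an obvious way, because here the \emph{input-generation} cost $\gcost$ is incurred on every sample regardless of whether the FOM or the ROM is queried, while the ROM evaluation itself is essentially free. Making the identification $w_1\leftrightarrow\gcost$ explicit, and interpreting the offline terms $(\gcost+w_0)n+t(n)$ as a lump-sum reduction of the budget, is what allows the classical MFMC machinery to be applied verbatim. A minor technicality is that $m_0^*(n),m_1^*(n)$ are in general non-integer; as standard in this literature, the closed form is stated for the continuous relaxation, and integer rounding perturbs the MSE only at lower order in $p_n$.
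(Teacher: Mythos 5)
Your proposal is correct and follows essentially the same route as the paper's proof: both reduce the claim to the standard MFMC optimal-allocation result \eqref{eq:optmse} by treating the offline cost as a lump-sum reduction of the budget ($p\to p_n$) and identifying the low-fidelity per-sample cost with the input-generation cost ($w_1\to\gcost$), with the hypothesis $w_0\rho^2(n)>\gcost(1-\rho^2(n))$ playing the role of the MFMC efficiency condition. The only difference is that the paper simply cites \cite{peherstorfer2016optimal} for the closed form, whereas you re-derive it explicitly via the substitution $r=m_1/m_0$; your computation is correct and makes the argument self-contained.
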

\begin{proof}
    As we noted previously, for a fixed training size $n$, and a corresponding trained ROM, constructing the DL-MFMC estimator is equivalent to constructing a MFMC estimator with budget $p_n=p-(\gcost+w_0)n-t(n)$ and cost function $m_0,m_1\mapsto m_0w_0+m_1\gcost$. Then, \eqref{eq:optdlmse} can be easily derived from \eqref{eq:optmse} up to substituting $p$ with $p_n$ and $w_1$ with $\gcost$. Here, the formula can be applied, since condition $w_0\rho^2(n)>\gcost(1-\rho^2(n))$ precisely translates into the efficiency condition required in the MFMC paradigm: see, e.g., \cite[Theorem 3.4]{peherstorfer2016optimal}.
\end{proof}

Since Eq. \eqref{eq:optdlmse} only depends on $n$, this suggests the possibility of finding an optimal sample size, $n=\nstar$, by minimizing \eqref{eq:optdlmse} with respect to $n.$ Indeed, this is what we are going to do. To this end, we start by characterizing the dependency of $t=t(n)$ and $\rho=\rho(n)$ on $n$.
We do this by relying on the following assumptions.

\begin{assumption}
\label{a:rho}
$\exists \alphareg,c_{1},c_{2}>0$ such that $1 - \rho^2(n) \leq c_1 n^{-\alphareg}+c_{2}$ for all $n\in\mathbb{N}$, $n\ge1.$
\end{assumption}

\begin{assumption}
$\exists c_{3},c_{4}>0$ such that $t(n) \leq c_{3} n+c_{4}$ for all $n\in\mathbb{N}$, $n\ge1.$
\label{a:trainingtime}
\end{assumption}

\begin{assumption} $\forall n\in\mathbb{N}_{+}$, one has $w_0\rho^2(n)>(1-\rho^2(n))\gcost.$
\label{a:condition}
\end{assumption}

The first assumption states that the quality of ROM QoIs increases for larger datasets. The bounding expression is similar to the one proposed in \cite{farcas2023context}, but also comes with an additional term, namely $c_{2}>0$, which accounts for potential limitations inherent in the ROM. In fact, even if provided with an infinite amount of data, certain ROMs might still be incapable of replicating FOM simulations in their entirety.

The second assumption, instead, states that the training time is linear in the sample size $n$. In practice, deep learning-based ROMs always satisfy this assumption. In fact, the training of a neural network model $\phi:\mathcal{X}\to\mathcal{Y}$ typically involves the minimization of a loss function of the form 
\begin{equation}
    \label{eq:abstract-loss}
    \mathscr{L}(\phi)=\sum_{i=1}^{n}\ell(\phi(x_{i}), y_{i}),
\end{equation}
where $\{(x_{i},y_{i})\}_{i=1}^{n}\subset\mathcal{X}\times\mathcal{Y}$ denote an abstract training set, whereas $\ell:\mathcal{Y}\times\mathcal{Y}\to[0,+\infty)$ is a suitable discrepancy measure. Clearly, up to fixing a total number of training epochs, minimizing \eqref{eq:abstract-loss} requires at most $O(n)$ operations: in fact, each term in the sum can be tackled separately, even when computing gradients. Here, the constant $c_{4}$ models a fixed cost, in relation, for example, to the initialization of the ROM.

Finally, the third assumption states that most of the cost lies in the evaluation of the FOM solver, rather than in the generation of the input data. In fact, if $\rho(n)\ge1/2$, the condition results in $w_0>\gcost$. In most applications, such as ours, this condition is typically satisfied.
\\\\
Under these assumptions, the MSE in Eq. \eqref{eq:optdlmse} can be bounded as follows.

\begin{lemma}
Let Assumptions \ref{a:rho} - \ref{a:condition} hold.
For all $n\in\mathbb{N}$, $n\ge1$, the $\mse$ of  $\DLMFMCestimator_n$ can be bounded as
\begin{equation}
    \label{eq:bound}
    \mse(\DLMFMCestimator_n) \leq \frac{2\sigma_0^2}{p - (\gcost+w_0) n - c_3 n - c_{4}} \left ( c_1 w_0 n^{-\alphareg} + c_{2}w_{0} + \gcost \right).
\end{equation}
\label{lm:estimatebound}
\end{lemma}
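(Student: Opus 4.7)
The starting point is the exact formula furnished by Proposition \ref{prop:optimal-dl-mse-n}, namely
\[
\MSE(\DLMFMCestimator_n) \;=\; \frac{\sigma_0^2}{p-(\gcost+w_0)n-t(n)}\,\Bigl(\sqrt{w_0(1-\rho^2(n))}+\sqrt{\gcost\,\rho^2(n)}\Bigr)^{2}.
\]
Note that Assumption \ref{a:condition} guarantees that the efficiency condition of Proposition \ref{prop:optimal-dl-mse-n} is in force for every admissible $n$, so that this identity may indeed be used. The plan is then to bound the numerator and the denominator separately by elementary means, using Assumptions \ref{a:rho} and \ref{a:trainingtime}.

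For the numerator, I would apply the elementary inequality $(\sqrt{a}+\sqrt{b})^{2}\le 2(a+b)$ (which is just the $\ell^{2}$--$\ell^{1}$ norm comparison on $\mathbb{R}^{2}$, or equivalently AM-QM) to the two nonnegative terms $a=w_0(1-\rho^2(n))$ and $b=\gcost\rho^2(n)$. This yields
\[
\Bigl(\sqrt{w_0(1-\rho^2(n))}+\sqrt{\gcost\,\rho^2(n)}\Bigr)^{2}\le 2\bigl(w_0(1-\rho^2(n))+\gcost\,\rho^2(n)\bigr).
\]
Then, I would invoke Assumption \ref{a:rho} to bound $1-\rho^2(n)\le c_1 n^{-\alphareg}+c_2$, and use the trivial inequality $\rho^2(n)\le 1$ on the second term. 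Combining these gives the numerator bound
\[
2\bigl(w_0(1-\rho^2(n))+\gcost\,\rho^2(n)\bigr)\le 2\bigl(c_1 w_0 n^{-\alphareg}+c_2 w_0+\gcost\bigr),
\]
which matches the factor in the right-hand side of the claim.

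For the denominator, I would apply Assumption \ref{a:trainingtime}, i.e. $t(n)\le c_3 n + c_4$, so that
\[
p-(\gcost+w_0)n-t(n)\;\ge\;p-(\gcost+w_0)n-c_3 n-c_4\;>\;0,
\]
the positivity being inherited from the assumed admissibility of $n$. Inverting this lower bound yields an upper bound on $1/(p-(\gcost+w_0)n-t(n))$ with the denominator appearing in \eqref{eq:bound}. Multiplying the numerator and denominator bounds together produces exactly the stated inequality. There is no substantive obstacle here: the proof is a chain of elementary estimates, and the only point that deserves a line of care is ensuring that the strict positivity of the denominator $p-(\gcost+w_0)n-c_3n-c_4$ is preserved, which is implicit in the admissibility hypothesis assumed throughout.
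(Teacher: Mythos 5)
Your proof is correct and follows essentially the same route as the paper: both start from the exact MSE formula of Proposition \ref{prop:optimal-dl-mse-n} (justified by Assumption \ref{a:condition}), bound the denominator via Assumption \ref{a:trainingtime}, and combine Assumption \ref{a:rho} with $\rho^2(n)\le 1$ and the elementary inequality $(\sqrt{a}+\sqrt{b})^2\le 2(a+b)$. The only difference is the order in which the elementary inequality and the correlation bounds are applied, which is immaterial.
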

\begin{proof} By Assumptions \ref{a:rho}-\ref{a:condition}, we have
\begin{multline*}
   \mse(\estimator)  =  \frac{\sigma_0^2}{p - (\gcost + w_0) n - t(n)} \left( \sqrt{w_0 (1- \rho^2(n))} + \sqrt{\gcost  \rho^2(n)} \right)^2 \\
     \leq \frac{\sigma_0^2}{p - (\gcost + w_0) n - c_3 n-c_{4}} \left( \sqrt{c_1 w_0 n^{-\alphareg}+c_{2}w_{0}} + \sqrt{\gcost } \right)^2 \\
     \leq \frac{2\sigma_0^2}{p - (\gcost + w_0) n - c_3 n - c_{4}} \left ( c_1 w_0 n^{-\alphareg} + c_{2}w_{0} + \gcost \right),
\end{multline*}
where we exploited the fact that $\rho(n)^{2}\le1$, and $(a+b)^{2}\le2(a^{2}+b^{2})$ for all $a,b\in\mathbb{R}.$
\end{proof}

Now, the idea is choose the training size $n$ by minimizing the upper bound in Lemma \ref{lm:estimatebound} rather than the $\mse$ of the estimator itself. As we shall prove in a moment, this results in a minimization problem admitting a unique minimizer $\nstar$, consistent with the budget constraint. We formalize these considerations in Proposition \ref{prop:uniqueness}, right after the auxiliary Lemma \ref{lemma:convexity}.

\begin{lemma}
    \label{lemma:convexity}
    Let $A>0$ and $\zeta>0$. The functions
    $$\varphi_{1}(x):=\frac{1}{A-x},\quad\quad\quad\quad
    \varphi_{2}(x):=\frac{x^{-\zeta}}{A-x}$$
    are convex in $(0,A)\subset\mathbb{R}$. Specifically, $\varphi_{1}''(x)>0$ and $\varphi_{2}''(x)\ge0$ for all $x\in(0,A).$
\end{lemma}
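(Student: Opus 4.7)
The plan is to verify each convexity claim by a direct second-derivative computation, exploiting the fact that both functions are smooth and strictly positive on $(0,A)$.

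For $\varphi_{1}$, I would simply differentiate twice: $\varphi_{1}'(x)=(A-x)^{-2}$ and $\varphi_{1}''(x)=2(A-x)^{-3}$, which is strictly positive on $(0,A)$ since $A-x>0$ there. This disposes of the first function with no subtlety.

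For $\varphi_{2}$, plain differentiation produces a cumbersome expression, so I would instead use logarithmic differentiation. Writing $\log\varphi_{2}(x)=-\zeta\log x-\log(A-x)$ and differentiating gives $\varphi_{2}'(x)/\varphi_{2}(x)=-\zeta/x+1/(A-x)$. Differentiating once more and using $\varphi_{2}>0$ on $(0,A)$, the key identity I would aim at is
\begin{equation*}
\varphi_{2}''(x)=\varphi_{2}(x)\left[\left(-\frac{\zeta}{x}+\frac{1}{A-x}\right)^{2}+\frac{\zeta}{x^{2}}+\frac{1}{(A-x)^{2}}\right].
\end{equation*}
The right-hand side is a sum of a square and two manifestly positive terms, all multiplied by the strictly positive factor $\varphi_{2}(x)$, so $\varphi_{2}''(x)\ge 0$ (in fact $>0$) on $(0,A)$, which is stronger than required.

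There is no real obstacle here: the only non-routine choice is to use logarithmic differentiation rather than the quotient/product rule, which keeps the computation clean and makes the non-negativity manifest as a sum of squares plus positives. Once the identity for $\varphi_{2}''$ is written in this form, the conclusion is immediate and no case analysis on $\zeta$ is needed.
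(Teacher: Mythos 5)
Your proof is correct. For $\varphi_1$ you do exactly what the paper does. For $\varphi_2$ you take a genuinely different (though equally elementary) route: the paper differentiates the quotient directly and then shows the resulting numerator $(\zeta+1)\zeta(A-x)^2x^{-\zeta-2}-2\zeta x^{-\zeta-1}(A-x)+2x^{-\zeta}$ is non-negative by discarding the ``$+1$'' and completing the square to obtain $x^{-\zeta-2}\left[\zeta(A-x)-x\right]^2\ge0$; you instead observe that $\varphi_2$ is log-convex, writing $\varphi_2''=\varphi_2\left[\left((\log\varphi_2)'\right)^2+(\log\varphi_2)''\right]$ with $(\log\varphi_2)''=\zeta/x^2+1/(A-x)^2>0$, so that positivity is manifest as a sum of a square and two positive terms. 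Your identity checks out, and it actually yields the slightly stronger conclusion $\varphi_2''>0$ on $(0,A)$ (the paper only claims $\varphi_2''\ge0$, which is all that Proposition \ref{prop:uniqueness} needs, since strict convexity of $F$ there is supplied by the $\varphi_1$-type term). The log-derivative organization avoids the ad hoc algebraic bound on the numerator and would generalize immediately to any product of log-convex factors; the paper's computation is more pedestrian but requires no auxiliary identity. Either proof is complete and no case analysis on $\zeta$ is needed in both.
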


\begin{proof}
    It is straightforward to see that
    $$\varphi''_{1}(x)=2(A-x)^{-3}$$
    for all $x\in(0,A)$. Thus, $\varphi_{1}''>0$ in $(0,A).$ As for $\varphi_{2}$, a direct computation shows that
    $$
    \varphi_{2}''(x)=\frac{(\alphareg+1)\alphareg(A-x)^{2}x^{-\zeta-2}-2x^{-\alphareg-1}\alphareg(A-x)+2x^{-\alphareg}}{(A-x)^{3}}.$$
    The numerator in the above can be rewritten and bounded as
    \begin{multline*}
    x^{-\alphareg-2}\left[(\alphareg+1)\alphareg(A-x)^{2}-2x\alphareg(A-x)+2x^{2}\right]
    \\\ge
    x^{-\alphareg-2}\left[\alphareg^{2}(A-x)^{2}-2x\alphareg(A-x)+2x^{2}\right]
    \\
    =x^{-\alphareg-2}\left[\alphareg(A-x)-x\right]^{2}.
    \end{multline*}
    The conclusion follows.
\end{proof}

\begin{proposition}[Existence and uniqueness of the global minimum]
\label{prop:uniqueness}
Let $p>0$ be a given budget. 
Fix any $w_0, \gcost,c_{1},c_{2},c_{3},c_{4},\alphareg>0$ and let $\nmax:=(p-c_{4})/(\gcost+w_{0}+c_{3}).$ 
Then, the following function is strictly convex in $(0,\nmax)$:
\begin{equation*}
    F: n \mapsto \frac{2\sigma_{0}^{2}}{p - (\gcost+w_0) n - c_3 n-c_{4}} \left( c_1 w_0 n^{-\alphareg} +c_{2}w_{0} + \gcost \right),
\end{equation*}
Furthermore, it admits a unique global minimum $\nstar$ within $(0, \nmax)$.
\end{proposition}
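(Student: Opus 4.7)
The plan is to exploit Lemma \ref{lemma:convexity} to establish strict convexity of $F$ on the open interval $(0,\nmax)$, and then combine this with a blow-up analysis at the two endpoints to deduce existence and uniqueness of the interior minimizer.

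First I would set $A := p - c_{4}$ and $B := \gcost + w_{0} + c_{3}$, so that the denominator in $F$ reads $A - Bn$ and $\nmax = A/B$. Splitting the numerator then yields
\[
F(n) = 2\sigma_{0}^{2}\left[c_{1}w_{0}\cdot\frac{n^{-\alphareg}}{A-Bn} \;+\; (c_{2}w_{0}+\gcost)\cdot\frac{1}{A-Bn}\right].
\]
Under the linear change of variables $x = Bn$, which maps $(0,\nmax)$ bijectively onto $(0,A)$, each of the two bracketed terms becomes, up to a positive multiplicative constant, one of the functions $\varphi_{1}(x) = 1/(A-x)$ and $\varphi_{2}(x) = x^{-\alphareg}/(A-x)$ of Lemma \ref{lemma:convexity}. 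Since linear reparametrization preserves (strict) convexity, $F$ is the sum of a strictly convex contribution (from $\varphi_{1}$, with positive coefficient $c_{2}w_{0}+\gcost$) and a convex contribution (from $\varphi_{2}$, with positive coefficient $c_{1}w_{0}$), hence strictly convex on $(0,\nmax)$.

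To conclude, I would examine the boundary behavior. As $n \to 0^{+}$, the factor $n^{-\alphareg}$ blows up while $A - Bn \to A > 0$, so $F(n) \to +\infty$; as $n \to \nmax^{-}$, the denominator $A - Bn$ tends to $0^{+}$ while the numerator stays bounded away from zero, so $F(n) \to +\infty$ again. A continuous strictly convex function on an open interval that tends to $+\infty$ at both endpoints attains a unique global minimum in the interior, which yields the desired $\nstar$. The main obstacle is really only bookkeeping: one needs to verify that all the coefficients that appear after the substitution are strictly positive (so that strict convexity is genuinely inherited by $F$), and one must implicitly assume $p > c_{4}$ to guarantee $\nmax > 0$ so that the admissible interval $(0,\nmax)$ is nonempty.
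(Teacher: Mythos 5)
Your proof is correct and follows essentially the same route as the paper: both decompose $F$ into the two terms covered by Lemma \ref{lemma:convexity} (the paper writes the denominator directly as a positive multiple of $\nmax-n$, while you reach the same normal form via the linear substitution $x=Bn$, a purely cosmetic difference) and then conclude via blow-up of $F$ at both endpoints. Your remark that $p>c_{4}$ is implicitly needed for $(0,\nmax)$ to be nonempty is a fair observation the paper leaves unstated.
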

\begin{proof}
We note that
$$F(n)=\frac{2\sigma_{0}^{2}c_{1}w_{0}}{\gcost+w_{0}+c_{3}}\cdot\frac{n^{-\alphareg}}{\nmax-n}+\frac{2\sigma_{0}^{2}(c_{2}w_{0}+\gcost)}{\gcost+w_{0}+c_{3}}\cdot\frac{1}{\nmax-n}.$$
Thus, by Lemma \ref{lemma:convexity}, $F$ is the positive sum of two convex functions, one of which is strictly convex. Consequently, $F''>0$ in $(0,\nmax)$. Since $F$ goes to infinity at the boundaries, $F(n)\to+\infty$ for $n\to0$ and $n\to\nmax$, it follows that $F$ admits a unique global minimum within $(0,\nmax).$ 
\end{proof}
\;\\
Using Proposition \ref{prop:uniqueness}, we finally propose the following optimal management policy:
\begin{itemize}
    \item [1.] Fix a computational budget $p>0$;\vspace{0.2cm}
    \item [2.] Generate a preliminary collection of FOM samples $n_0$, by consuming a small portion of the budget, $n_0(w_0+\gcost)\ll p$;\vspace{0.2cm}
    \item [3.] Initialize and train the ROM for different sample sizes $n_{1}<\dots<n_{k}\le n_{0}$, as to obtain auxiliary records about training times, $t(n_{1}),\dots,t(n_{k})$, and QoI correlations, $\rho(n_{1}),\dots,\rho(n_{k})$;\vspace{0.2cm}
    \item [4.] Estimate the coefficients $c_{1},c_{2},c_{3},c_{4},\alphareg$ by leveraging on $\{n_{j},t(n_{j}),\rho(n_{j})\}_{j=1}^{k};$\vspace{0.2cm}
    \item [5.] Find the optimal sample size $\nstar$ by solving the minimization problem in Proposition \ref{prop:uniqueness}. Without loss of generality, we assume $\nstar>n_0$;\vspace{0.2cm}
    \item [6.] Augment the training set by generating $\Delta n = \nstar-n_0$ new FOM simulations. The remaining budget is now $p-(\gcost+w_0)\nstar$;\vspace{0.2cm}
    \item [7.] Train a final ROM surrogate by using all $\nstar$ FOM simulations, and compute the corresponding QoI correlation coefficient $\rho=\rho(\nstar).$ The remaining budget is now $p-(\gcost+w_0)\nstar-t(\nstar);$\vspace{0.2cm}
    \item [8.] Compute $m_{0}^*=m_0^*(\nstar)$, $m_{1}^*=m_1^*(\nstar)$ and $\lambda_*=\lambda_*(\nstar)$ according to the MFMC paradigm, that is,
    \begin{equation}
\begin{cases}
    \displaystyle
    m_0^* = \frac{p - (\gcost+w_0) \nstar - t(\nstar)}{w_0 + w_1 r}, \vspace{0.2cm}\\\vspace{0.2cm}
    m_1^* = r m_0^*,\\ \displaystyle
    \lambda_*=\frac{\rho(\nstar)\sigma_{0}}{\sigma_{1}},
\end{cases}
\end{equation}
where $r^{2} := (w_0\rho^2(\nstar))/(\gcost(1-\rho^2(\nstar));$\vspace{0.2cm}
    
    \item [9.] Construct the DL-MFMC estimator
    \begin{equation}
    \label{eq:dlmfmc-estimator}
    \DLMFMCestimator := \DLMFMCestimator_{\nstar},
    \end{equation}
    by sampling the required FOM-ROM simulations, thus exhausting the computational budget.  
\end{itemize}

\begin{remark}
    We stress that, in practice, the values of $m_0^*$, $m_1^*$ and $\lambda_*$ are
    approximated by leveraging on empirical estimates of $\rho(\nstar)$, $\sigma_0$ and $\sigma_{1}(\nstar)$: however, in order to keep the notation lighter, we choose not to make this distinction explicit.
    Operationally, the idea goes as follows. First, we exploit the training data to calculate a preliminary estimate of $\rho(\nstar)$, which we use to calculate the optimal sample sizes $m_0^*$, $m_1^*$. Then, following the MFMC paradigm, we compute $m_0^*$ new independent FOM simulations. On these simulations we derive the final estimates of $\rho(\nstar)$, $\sigma_0$, $\sigma_{1}(\nstar)$, and consequently, $\lambda_{*}.$
\end{remark}

\begin{remark}
    Step n.3 in the optimal policy pipeline concerns estimating the correlation coefficients $\rho(n_{1}),\dots,\rho(n_{k})$. One way to achieve this is to directly utilize the training data. For any $j\in{1,\dots, k}$, let $\text{ROM}_{j}$ denote the ROM in its $j$th training iteration. Let $\para_{1},\dots,\para_{n_{j}}$ be the input parameters observed during training. Then
\begin{equation}
\label{eq:corr_estimate}
\frac{\sum\limits_{i=1}^{n_{j}}\left[Q(\ufom_{\para_{i}})Q(\urommj_{\para_{i}})-\left(\frac{1}{n_{j}}\sum\limits_{b=1}^{n_{j}}Q(\ufom_{\para_{b}})\right)\left(\frac{1}{n_{j}}\sum\limits_{b=1}^{n_{j}}Q(\urommj_{\para_{b}})\right)\right]}{\sqrt{\sum\limits_{i=1}^{n_{j}}\left[Q(\ufom_{\para_{i}})-\left(\frac{1}{n_{j}}\sum\limits_{b=1}^{n_{j}}Q(\urommj_{\para_{b}})\right)\right]^{2}\sum\limits_{i=1}^{n_{j}}\left[Q(\urommj_{\para_{i}})-\left(\frac{1}{n_{j}}\sum\limits_{b=1}^{n_{j}}Q(\urommj_{\para_{b}})\right)\right]^{2}}}\end{equation}
can be a crude approximation of $\rho(n_{j}).$
However, \eqref{eq:corr_estimate} would likely result in biased estimate: indeed, $Q(\urommj_{\para_{1}}),\dots,Q(\urommj_{\para_{n_{j}}})$ are, in general, statistically correlated (notice, in fact, that $\text{ROM}_{j}$ itself depends on the whole training set). To account for this, a better approach is to rely on a different set of data, independent of the training set. Operationally, this can be achieved by picking $n_{k}$ such that $n_{k}<n_{0}$. In this way, the remaining $n_{0}-n_{k}$ observations can be used as a common "test set", shared within the $j$ trainings. In other words, in iteration $j$, the ROM is trained on $\para_{1},\dots,\para_{n_{j}}$, but formula \eqref{eq:corr_estimate} is evaluated on $\para_{n_{0}-n_{k}},\dots,\para_{n_{0}}$.  
\end{remark}

\subsubsection{DL-MFMC estimator and uncertainty quantification}

Given a computational budget $p>0$, the DL-MFMC estimator $\DLMFMCestimator$ can be constructed following the optimal management policy in Section \ref{sec:optimal-policy}. By combining FOM and ROM simulations, the latter provides an efficient and robust estimate of $\mathbb{E}_{\para\sim\mathbb{P}}[Q(\ufom_{\para})]$, characterized by a significant reduction of the uncertainties. In practice, this fact becomes apparent when considering confidence intervals, rather than crude pointwise estimates. 

\begin{definition}[{\bf DL-MFMC confidence interval}]
\label{def:ic-dl-mfmc}
Let $Q$ be a given quantity of interest. For a given computational budget $p>0$, let $\DLMFMCestimator$ be the DL-MFMC estimator, computed as in Section \ref{sec:dlmfmc}. Fix a confidence level $\gamma\in(0,1)$. The DL-MFMC confidence interval of level $\gamma$ is
\begin{equation}
    \label{eq:IC-DL-MFMC}
\imfmc:=\DLMFMCestimator\pm\mathrm{z}_{\frac{1-\gamma}{2}}\sqrt{\frac{\hat{\sigma}_0^2}{m_0^*(\nstar)} - \left( \frac{1}{m_0^*(\nstar)} - \frac{1}{m_1^*(\nstar)} \right)\hat{\rho}^2(\nstar) \hat{\sigma}_0^2},\end{equation}
where $\mathrm{z}_{\gamma}$ denotes the quantile of level $1-\gamma$ of the standard Gaussian distribution $\mathcal{N}(0,1)$.
\end{definition}

Similarly to the MC-FOM case, the DL-MFMC confidence interval is constructed such that
$$\text{Prob}\left(\mathbb{E}[Q(\ufom_{\para})]\in \imfmc\right)\approx \gamma.$$
Notice, however, that Definition \ref{def:ic-dl-mfmc} uses normal quantiles rather than $t$-student ones. In fact, while the latter appear naturally when considering uncorrelated Monte Carlo samples, they do not extend to multi-fidelity estimators (which, in contrast, leverage on correlated observations). For this reason, we rely on more general quantiles, derived from the Gaussian distribution. The heuristics behind this choice lies in the Central Limit Theorem, according to which the independent sum of identically distributed random variables is asymptotically normally distributed. Since 
$$\DLMFMCestimator = \FOMestimator{m_{0}^*(\nstar)} + \lambda_*(\nstar)\hat{E}_{m_1^*(\nstar)}^{\text{ROM}} -\lambda_*(\nstar) \hat{E}_{m_0^*(\nstar)}^{\text{ROM}},$$
the DL-MFMC estimator can be seen as the sum of three random variables, $$X=\FOMestimator{m_{0}^*(\nstar)},\;\;\;\;Y=\lambda_*(\nstar)\hat{E}_{m_1^*(\nstar)}^{\text{ROM}},\;\;\;\;Z=-\lambda_*(\nstar) \hat{E}_{m_0^*(\nstar)}^{\text{ROM}},$$ each of which is asymptotically normal. Then, since the sum of normal random variables is also normal, we can approximate the distribution of $\DLMFMCestimator$ as $$\mathcal{N}\Big(\mathbb{E}[X+Y+Z],\;\variance(X+Y+Z)\Big)=\mathcal{N}\Big(0,\;\MSE(\DLMFMCestimator)\Big),$$
thus motivating the formula in Definition \ref{def:ic-dl-mfmc}.
\section{A model for oxygen transport in microcirculation and radiotherapy}
\label{sec:microcirc}


In this Section and in the following, we present an application of our approach to a comprehensive mathematical model of microcirculation, with a specific focus on the intricate interplay between oxygen transport and its implications for radiotherapy. Initially, we introduce the biophysical model of reference along with its associated quantities of interest; then, we provide a succinct overview of the FOM and its corresponding surrogate model. We point out that these three stages modeling of the physical phenomenon, high-fidelity discretization, and model order reduction were previously undertaken by the authors and their collaborators in earlier works: the interested reader can refer to \cite{Possenti2019b}, \cite{Possenti20213356}, and \cite{Vitullo2024}, respectively.

Having established the groundwork, we then move to the actual application of the DL-MFMC approach, offering a comprehensive discussion in Section \ref{sec:results}.


\subsection{Oxygen transport in microcirculation: mechanistic model and quantities of interest}
\label{sec:qois}

To model oxygen transport, we rely on the model presented by Possenti et al. in \cite{Possenti2019b, Possenti20213356}, which encompasses blood flow, hematocrit transport coupled with interstitial flow, and oxygen transport in both blood and tissue through vascular-tissue exchange. 

The general model describes flow in two distinct domains: the tissue domain ($\Omega \subset \RR^3$ with $\dim(\Omega)=3$), where the unknowns encompass fluid pressure $p_t$, fluid velocity $\uu_t$ and oxygen concentration $C_t$; and the vascular domain ($\Lambda \subset \RR^3$ with $\dim(\Lambda)=1$), representing a metric graph that describes a network of connected one-dimensional channels. In this domain, unknowns involve blood pressure $p_v$, blood velocity $\uu_v$, and vascular oxygen concentration $C_v$. The model for oxygen transport uses velocity fields $\uu_v$ and $\uu_t$ to describe blood flow in the vascular network and plasma flow in tissue. The governing equations for the oxygen transfer model are as follows:

\begin{equation}
\label{eq:oxy}
\begin{cases}
\displaystyle
\nabla \cdot \left(-D_t \nabla C_t + \uu_t~ C_t\right) 
+ V_{max} ~ \frac{C_{t}}{C_{t} + \alphaox ~p_{m_{50}}} = \phi_{O_2} \, \delta_\Lambda \quad &\textrm{on $\Omega$}
\\[10pt]       
\displaystyle
\pi R^2 \frac{\partial}{\partial s} \left(
- D_v \frac{\partial C_v}{\partial s} + {v_v} ~ C_v 
+ {v_v} ~ k_1~H~  \frac{C_v^\gamma}{C_v^\gamma + k_2}\right)  
= -  \phi_{O_2} \quad &\textrm{on $\Lambda$}
\\[8pt]
\displaystyle
\phi_{O_2} = 2 \pi R~P_{O_2} (C_v - \bct) + (1-\sigma_{O_2})~\left(\frac{C_v + \bct}{2}\right)~\phi_v \quad &\textrm{on $\Lambda$}
\\[10pt]
\displaystyle
\phi_v=2 \pi R L_p\big((p_v - \bp_t) - \sigma(\pi_v-\pi_t)\big)
\\[6pt]
\displaystyle
C_v = C_{in}  \quad &\textrm{on}~\partial\Lambda_{\text{in}}
\\[6pt]
\displaystyle
- D_v \frac{\partial C_v}{\partial s} = 0  \quad &\textrm{on}~\partial\Lambda_{\text{out}}
\\[6pt]
\displaystyle
- D_t \nabla C_t\cdot\mathbf{n} = \betareg_{O_2}~(C_t - C_{0,t}) \quad &\textrm{on}~\partial\Omega.
\end{cases}
\end{equation}

Specifically, the first equation governs the oxygen distribution within the tissue, the second outlines how oxygen is transported through the bloodstream, and the third defines the transfer of oxygen between the two domains, $\Omega$ and $\Lambda$. In particular, the flux $\phi_{O_2}$ is derived under the assumption that the vascular wall acts as a semi-permeable membrane. Complementing this model is a set of boundary conditions detailed in the final three equations: at the vascular inlets $\partial\Lambda_{\text{in}},$ we prescribe the oxygen concentration; at the vascular endpoints $\partial\Lambda_{\text{out}},$ null diffusive flux is enforced; and for the boundary of the tissue domain $\partial\Omega,$ we simulate the presence of an adjacent tissue domain with boundary conductivity $\betareg_{O_2}$ and a concentration in the far field $C_{0,t}$.

The symbols $D_t, D_v, V_{max}, \alphaox, p_{m_{50}}, k_1, k_2, C_v^\gamma, P_{O_2}, \sigma_{O_2}, L_p, \sigma, \pi_v, \pi_t$ represent constants independent of the model solution. For a complete explanation of the physical significance of these variables, refer, for example, to \cite{Possenti20213356}.
\begin{figure}
\centering
\includegraphics[width=0.3\textheight]{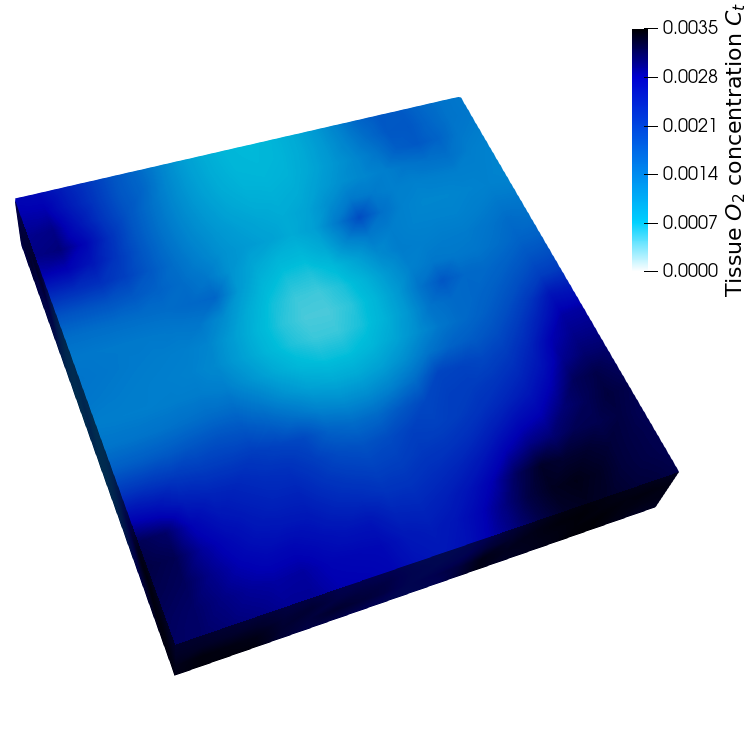}
\includegraphics[width=0.3\textheight]{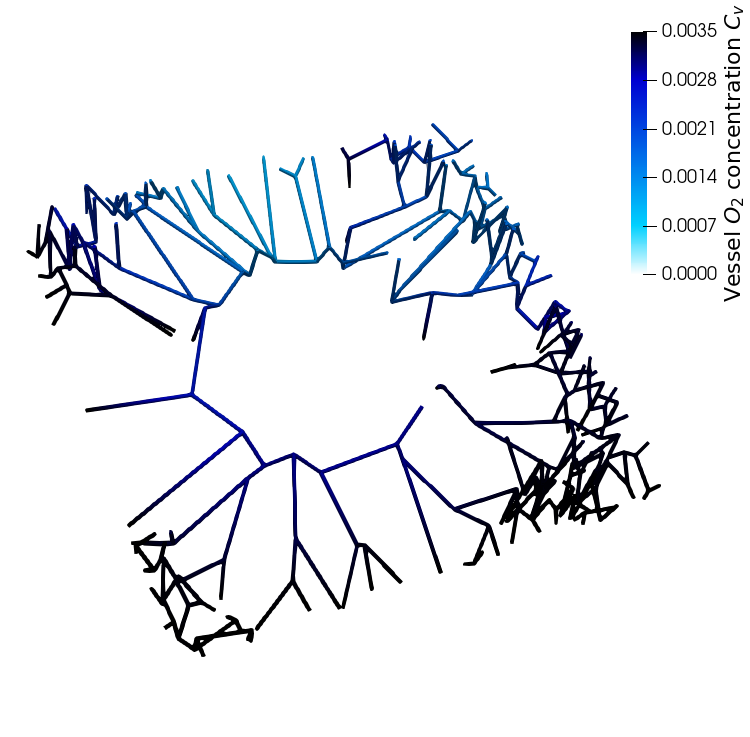}
\caption{The tissue oxygenation map, measured in $mL_O2/mL_B$, is visually depicted on the left panel through the FOM solution (light blue corresponds to low oxygen). On the right panel, we showc the 1D embedded vascular microstructure, which visibly impacts the oxygen map. Furthermore, oxygen concentration in the blood, indicated as $C_v$, is also reported.}
\label{fig:outputmicro}
\end{figure}

Our main interest is to perform a reliable analysis of certain quantities of interest, relevant for radiotherapy applications, when the topology of the vascular network $\Lambda$, and the values of the physical parameters $V_{max},C_{in},P_{O_{2}}$ are uncertain. The choice of these parameters is motivated by the sensitivity analysis study recently performed in \cite{VitulloSensitivity}. As we shall see in a moment, these quantities of interest can be expressed as certain functionals of the tissue oxygenation map $C_{t}$, herein measured in $mL_{O_2}/mL_B$. We refer to Figure \ref{fig:outputmicro} for a visual example. In this sense, 
$$u_{\para}=C_{t}\quad\text{and}\quad\para=[\Lambda,V_{max},C_{in},P_{O_{2}}],$$
according to our notation in Section \ref{sec:dlmfmc}. 
Here, we shall focus on three quantities of interest: average partial pressure, partial pressure variability, and tumor control probability. We detail them below.

%


\subsubsection*{Average oxygen partial pressure $\overline{pO_2}$}
To start, we consider the spatial average of the oxygen partial pressure, $\overline{pO_2}$, computed as
\begin{equation}
    \label{eq:avg_pO2}
    \overline{pO_2} = \frac{1}{\lVert\Omega\rVert} \int_{\Omega} \frac{C_t(\textbf{x})}{\alphaox},
\end{equation}
where $\alphaox$ is the oxygen solubility in the tissue. We note that $\overline{pO_2}=Q(C_{t})$, where $Q:L^{2}(\Omega)\to\mathbb{R}$ is linear and continuous with respect to the $L^{2}$-norm.


\subsubsection*{Oxygen partial pressure range $\Delta{pO_2}$}
Since the average partial pressure provides a global perspective on tissue perfusion, it is also interesting to consider different QoIs which are more sensitive to local fluctuations. To this end, we introduce a further QoI, $\Delta pO_2$, measuring the difference between the maximum and the minimum partial pressure. We call the latter \textit{oxygen partial pressure range}. Operationally, this is computed as
\begin{equation}
    \label{eq:delta_pO2}
    \Delta pO_2 = \frac{\max_{\substack{\textbf{x}\in \Omega}} C_t(\textbf{x}) - \min_{\substack{\textbf{x}\in \Omega}} C_t(\textbf{x})}{\alphaox}.
\end{equation}
From a clinical point of view, $\Delta pO_2$ quantifies the abundance of local hypoxic effects in the tissue. We note that, unlike the average partial pressure, this QoI is non-smooth and not even defined on $L^{2}(\Omega)$ as a whole. However, it can be regarded as continuous functional with respect to the $L^{\infty}$-norm, that is: $\Delta pO_2=Q(C_{t})$ with $Q:L^{\infty}(\Omega)\subset L^{2}(\Omega)\to\mathbb{R}$.

\subsubsection*{Tumor Control Probability (TCP)}
As a final example, we consider a radiotherapy-related QoI regarding the probability of tumor eradication (Tumor Control Probability, TCP for short). To this end, we model radiotherapy treatment using the linear-quadratic model, which is the most widely used radiological model. The latter is based on two different parameters that describe the radiosensitivity of cells or tissue. The first parameter $\alpha$ describes the lethal damage resulting from a single hit, while $\beta$ is related to multiple hits, namely the interaction of multiple radiation tracks \cite{McMahon2019}.
Combining these two parameters and the dose administered ($D$), we model the surviving fraction $\survfraction$ as a spatially dependent map, adopting the model proposed by Tinganelli et al. in \cite{Tinganelli2015} that accounts for the effect of oxygen on radiotherapy:
\begin{equation}
    \label{eq:LQ_TIN}
    \survfraction(D,pO_2,\mathbf{x}) = \exp\left( - \alpha\frac{D}{\OER(pO_2(\mathbf{x}),\;LET)} ~ -  \beta \left( \frac{D}{\OER(pO_2(\mathbf{x}),\;LET)}\right)^2 \right),
\end{equation}
where $pO_2=C_{t}/\alpha_{t}$ is the oxygen partial pressure, whereas the Oxygen Enhancement Ratio (\OER) is a suitable transformation operating a change of scale: 
\begin{equation*}
    \displaystyle
     \OER(0, LET) ~ =  \frac{LET^\delta + M ~ a}{a + LET^\delta},
\end{equation*}  
\begin{equation*}
     \OER(\mathbf{x}, pO_2, LET) ~ =  \frac{b ~  \OER(0,\;LET) + pO_2(\mathbf{x})}{b + pO_2(\mathbf{x})},
\end{equation*}
\begin{equation*}
     \displaystyle
     D_{\OER}(\mathbf{x},pO_2,LET) ~ =  \frac{D}{\OER(pO_2(\mathbf{x}),\;LET)}.
\end{equation*}
Here, $M$, $a$, $\delta$, $b>0$ are the model parameters fitted to the experimental data, $LET$ is the linear energy transfer of ionizing radiation, and $D_{\OER}(\mathbf{x}, pO_2, LET)$ is the dose corrected for the oxygen effect, to be included in the LQ model.
The admissible values of $\survfraction$ range from 0 to 1, representing the fraction of cells that survived treatment with the specified dose $D$, assumed as a constant.

Based on that, we define the Tumor Control Probability (TCP), which describes the probability of successful treatment.
It is based on the number of cells that survive treatment $\survfraction$ and the distribution of the initial number of clonogenic cells $N=N(\mathbf{x})$. For a fixed radiation dose $D$, 
and under the assumption of unicellular independence, following the approach proposed by Strigari et al. \cite{Strigari2018}, the TCP can be expressed as
\begin{equation}
    \label{eq:TCP}
    \TCP = \exp\left(-\int_{\Omega}N(\mathbf{x})S_{f}(D,C_{t}/\alpha_{t},\mathbf{x})d\mathbf{x}\right).
\end{equation}
We mention that although the definition of the TCP is far more involved compared to the one of the average partial pressure, the TCP can be realized as a Lipschitz continuous nonlinear functional of the oxygen concentration $C_{t}$, with respect to the $L^{2}$-norm. More precisely, TCP = $Q(C_{t})$ for some smooth operator $$Q:L^{2}_{+}(\Omega)\to\mathbb{R},$$
where $L^{2}_{+}(\Omega):=\{u\in L^{2}(\Omega)\;\text{s.t.}\;u\ge0\}$. We leave this simple verification to the reader.


\subsection{Description of the FOM: a high-fidelity model leveraging on the finite element method}
\label{subsec:fom}
As a high-fidelity model, we consider a FOM based off a Finite Element discretization of problem \eqref{eq:oxy}.
Following the approach outlined in \cite{Possenti2019b}, we achieve this by discretizing both the interstitial domain $\Omega$, representing a 3D slab with dimensions of $1,mm$ in edge length and $0.15,mm$ in thickness, and the embedded 1D vascular network $\Lambda$.

For the interstitial domain, we utilize a structured mesh composed of tetrahedral elements arranged in a $20\times20\times3$ grid, resulting in $N_h=1764$ degrees of freedom. Over this mesh, we define a corresponding space of piecewise linear continuous Lagrangian finite elements, denoted as $V_{t,h}=X_{h}^{1}(\Omega)$.
To discretize the vascular network, instead, we subdivide each vascular branch $\Lambda_i$ into multiple linear segments. Subsequently, we associate each branch with a corresponding finite element space $V_{v,h}^{i}=X_{h}^{1}(\Lambda_{i})$, comprising piecewise linear continuous Lagrangian elements.
With this setup, we instruct the FOM solver to produce a numerical solution to \eqref{eq:oxy} as
$$(C_t, C_v)\in V_{t,h}\times V_{v,h},$$
where $V_{v,h}=\left(\bigcup_{i=1}^{N_b}V_{v,h}^i \right) \bigcap \mathcal{C}^0(\Lambda)$.

Notice, however, that in order to assemble (and solve) the oxygen transfer model, one first needs to solve the underlying fluid flow problem, which takes place in the vascular microenvironment \cite{Possenti2019b,Possenti20213356}. Figure \ref{fig:layoutFOM} illustrates the sequential calculation of velocity, pressure, and discharge hematocrit in both tissue and vascular networks using the finite element method. We refer to \cite{Possenti20213356} for more details on the derivation and validation of the model.
\begin{figure}[htb]
\centering
\includegraphics[width=\textwidth]{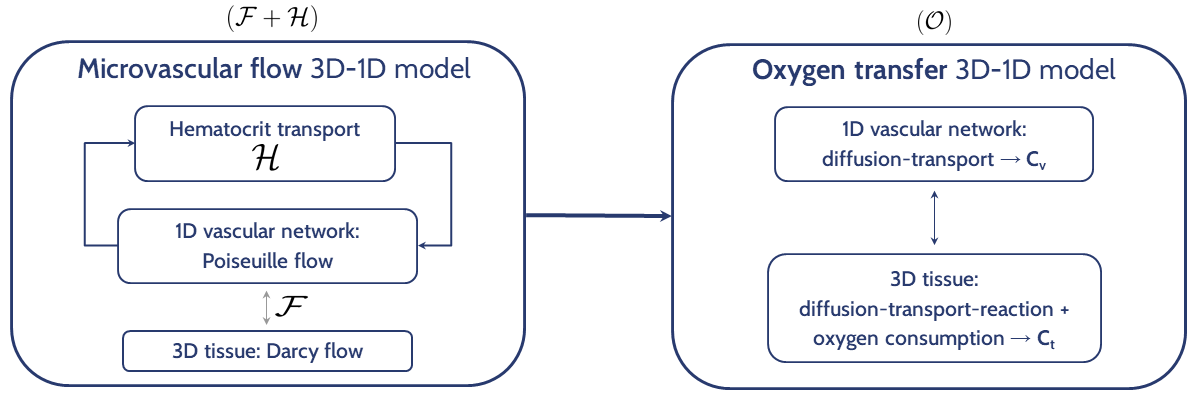}
\caption{General layout of the full order model for the whole vascular microenvironment.}
\label{fig:layoutFOM}
\end{figure}
\\\\
As we already mentioned, our main interest is to characterize the behavior of certain quantities of interest, related to $C_{t}$, under suitable uncertainties in the model parameters. In particular, following an earlier sensitivity analysis performed by Vitullo et al. \cite{VitulloSensitivity}, we focus our attention on the role played by $V_{max},$ $C_{v,in}$, $P_{O_2}$ and $\Lambda$.

In order to explore different scenarios, we sample the model parameters as follows. For the physical parameters —specifically $V_{max}$, $C_{v,in}$, and $P_{O_2}$— we randomly select values within the physiological range of variation in a uniform manner (see Table \ref{tab:parameters}). For the vascular network, instead, we utilize a random generator implementing a biomimetic algorithm that emulates the natural process of new blood vessel formation.
The output of this algorithm is driven by the value of two geometrical parameters: the vascular surface area per unit volume of tissue, denoted as $S/V$, and the percentage of seeds for angiogenesis. Essentially, the former represents the overall density of the vascular network, affecting the number and spacing of blood vessels within the tissue, while the latter governs the distribution of initial point seeds, serving as starting points for the algorithm to initiate new blood vessel growth. Together, these two parameters allow the generation of complex vascular networks, with different configurations of the blood vessels and extravascular regions of varying dimensions: see, e.g., Figure \ref{fig:networks}. Similarly to the physical parameters, the values of the geometrical parameters are sampled randomly within the corresponding range of variation.

\begin{figure}[htb]
    \centering
    \begin{subfigure}{0.48\textwidth}
    \includegraphics[scale=0.25]{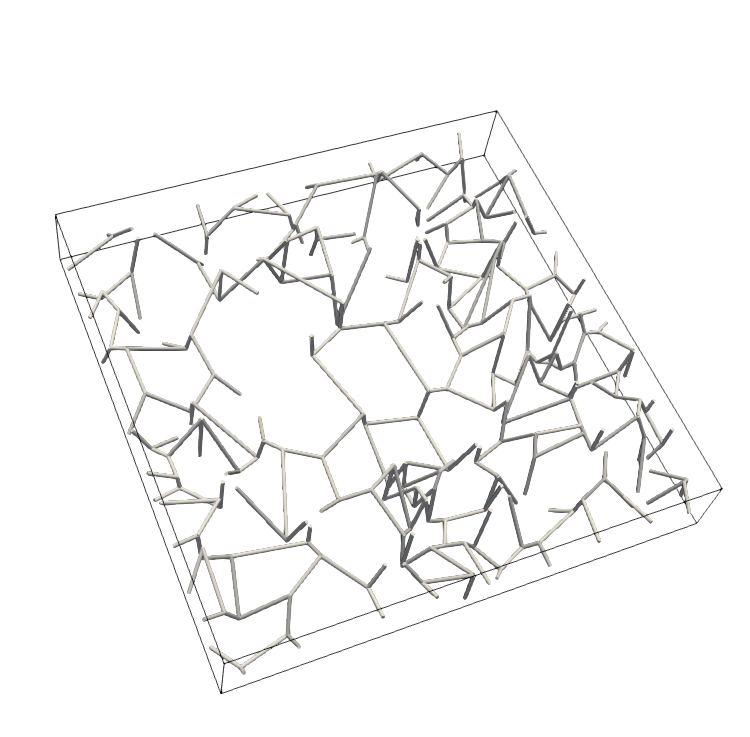}    
    \caption{Low vascularization, homogeneous distribution}
    \end{subfigure}
    \begin{subfigure}{0.48\textwidth}
    \includegraphics[scale=0.25]{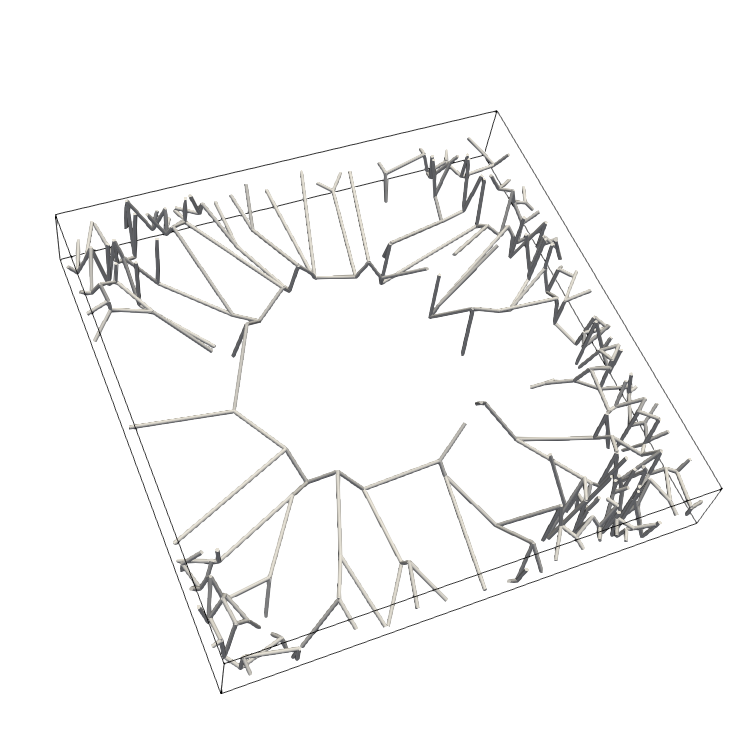}
    \caption{Low vascularization, high extravascular distance}
    \end{subfigure}
    \\
    \begin{subfigure}{0.48\textwidth}
    \includegraphics[scale=0.25]{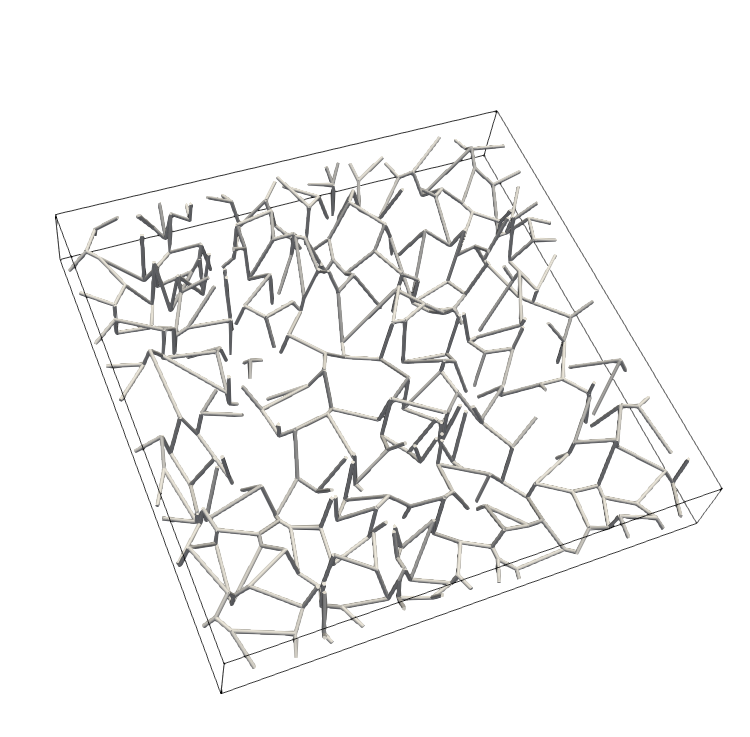}
    \caption{High vascularization, homogeneous distribution}
    \end{subfigure}
    \begin{subfigure}{0.48\textwidth}
    \includegraphics[scale=0.25]{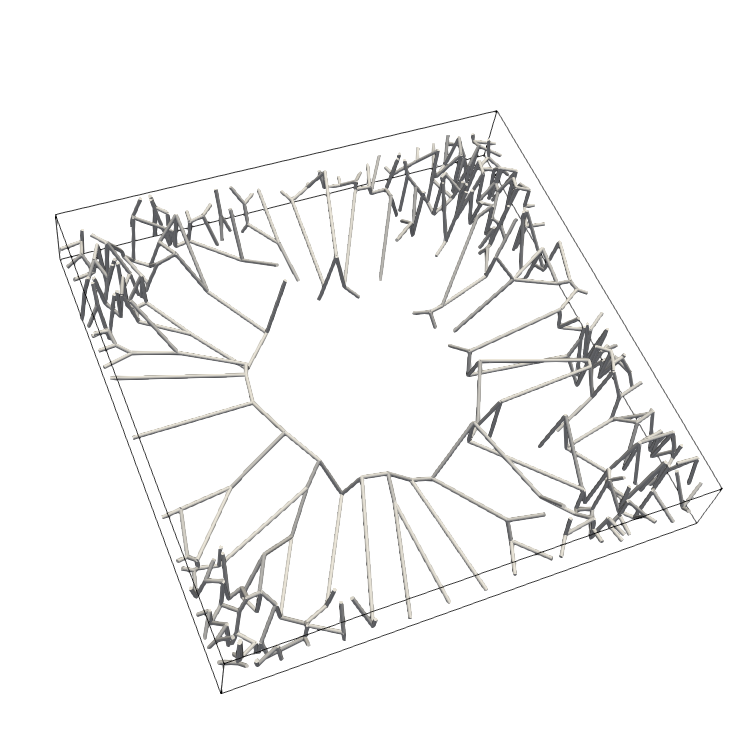}
    \caption{High vascularization, high extravascular distance}
    \end{subfigure}
    
    \caption{Examples of architectures with maximum extravascular distance increasing from left to right and higher vascular density from top to bottom.}
    \label{fig:networks}
\end{figure} 


\renewcommand{\arraystretch}{1.5}
\begin{table} [h!tb]
	\begin{center}
		\begin{tabular}{llll}
		\hline\hline
			\textbf{Symbol} & \textbf{Parameter} & \textbf{Unit} & \textbf{Range of variation} \\ \hline
			$P_{O_2}$ & \small{$O_2$ wall permeability} & $m/s$ & $0.35 \cdot 10^{-4}-3.00 \cdot 10^{-4}$
            \\
			$V_{max}$ & \small{$O_2$ consumption rate} & $\frac{mL_{O_2}}{cm^3\cdot s}$ & $0.40 \cdot 10^{-4}-2.40 \cdot 10^{-4}$
            \\
		$C_{v,in}$ & \small{$O_2$ concentration at the inlets} & $\frac{mL_{O_2}}{mL_B}$ & $2.25 \cdot 10^{-3}-3.75 \cdot 10^{-3}$
            \\
   		$\% \frac{SEEDS_{(-)}}{SEEDS_{(+)}}$ & \small{Seeds for angiogenesis} & $\%$ & $0-75$ 
        \\ 
            $S/V$ & \small{Vascular surface per unit volume} & $m^{-1}$ & $5\cdot10^3-7\cdot10^3$ \\ \hline\hline
		\end{tabular}	
            \caption{In the first three rows, the biophysical parameters of the ROM are presented along with their respective ranges of variation. The last two rows outline the hyper-parameters utilized to initialize the algorithm responsible for generating the vascular network.}
      	\label{tab:parameters}
	\end{center}
\end{table}

\subsection{Description of the ROM: a nonintrusive surrogate model based on the POD-MINN+ approach}
\label{subsec:podminn}
To construct the ROM, we exploit the POD-MINN+ approach, a nonintrusive model order reduction technique recently presented in \cite{Vitullo2024}, which the authors specifically designed to address for parametrized problems with embedded microstructure. 
%
The goal of this strategy is to derive a parameter-to-solution map using physical and geometric inputs and to generate a reconstruction of the high-fidelity oxygen concentration map in the tissue from which we calcuate the QoIs. 

Simply put, the POD-MINN+ technique consists of a projection-based regression model, incorporating neural networks and Proper Orthogonal Decomposition (POD), as in \cite{hesthaven2018non}, combined with a closure model that captures high frequency components by introducing suitable local corrections. For this purpose, the POD-MINN+ approach leverages on a specific class of neural network architectures, termed Mesh-Informed Neural Networks (MINNs) \cite{FrancoMINN}. A sketch of the methodology is shown in Figure \ref{fig:POD-MINN+}. In formulas, the whole idea can be synthesized as
\begin{equation}\label{eq:PODMINN+}
    \urom := \VV\mathcal{M}_{rb}(\boldsymbol{\mu}) + \mathcal{M}_c(\Lambda),
\end{equation}
where $\mathbb{V}\in\mathbb{R}^{N_{h}\times k}$ is the POD basis, while $\mathcal{M}_{rb}$ and $\mathcal{M}_c$ are suitable neural network architectures. The first one maps the model parameters, $\para$, onto the corresponding POD coefficients, while the second one introduces a local correction that only depends on the vascular network $\Lambda$.

In general, both $\mathcal{M}_{rb}$ and $\mathcal{M}_c$ need to process information about the vascular graph. To this end, we parametrize $\Lambda$ in terms of the extravascular distance $\dist \in \RR^{N_h}$ and the inlet characteristic function $\inlets \in \RR^{N_h}$: the former maps each point in the tissue domain to its distance from the nearest point in the vascular network; the latter, instead, assigns a unit value to the nodes located near the inlets within the computational mesh of the tissue domain. 

We avoid considering a large number of POD modes for the low-fidelity approximation of the solution manifold in the framework of complex microstructures, where the Kolmogorov $n$-width can be slow decaying. In this case, we exploit the information contained in the degrees of freedom corresponding to the extravascular distance $\dist$ and the inlet characteristic function $\inlets$. 
\\\\
Coherently with our presentation in Section \ref{sec:dlmfmc}, the POD-MINN+ approach is a model order reduction technique that leverages on a supervised learning strategy. In particular, it necessitates of some training data. The idea goes as follows. 
First, we exploit the FOM solver to construct a suitable training set $\{(\para_{i},\ufom_{i})\}_{i=1}^{n}$. As a second step, we collect all high-fidelity simulations in a matrix of snapshots and perform a Singular Value Decomposition in order to capture the most relevant modes: this results in the construction of the POD basis $\VV\in\mathbb{R}^{N_{h}\times k}$, with $k\ll N_{h}$. In general, although problems featuring complex microstructures have a slow-decay in the Kolmogorov $n$-width \cite{Vitullo2024}, we avoid considering a large number of POD modes, as the higher frequencies will be incorporated in the closure term.

Once the POD matrix has been constructed, we use it to project the solution manifold onto a linear trial subspace of dimension $k \ll N_{h}$: thanks to this maneuver, we can move our attention from FOM solutions to POD coefficients. At this point, we assemble a neural network unit responsible for the approximation of the parameter-to-POD-coefficients map. Following \cite{Vitullo2024}, we construct the latter using two MINNs, $\mathcal{M}_{rb,\eta}$ and $\mathcal{M}_{rb,d}$, and a deep feed forward neural network, $\mathcal{N}_{rb,ph}$. The first two account account for embedded 1D structure, while the last one models the effects of the physical parameters $\para_{ph}=[V_{max},C_{v,in},P_{O_2}]$. The three components act as
%
%
\begin{equation}\label{eq:POD_MINN_micro}
\mathcal{M}_{rb}(\boldsymbol{\mu}) = \Big ( \mathcal{M}_{rb,\eta}(\inlets) \odot \mathcal{M}_{rb,d}(\dist) \Big ) \mathcal{N}_{rb,ph}(\boldsymbol{\mu}_{ph}),
\end{equation}
where we recall that $\para=[\Lambda,V_{max},C_{v,in},P_{O_2}]=[\dist,\inlets,\para_{ph}].$ Here $\odot$ is the Hadamard product, modeling a suitable interaction between the inlets function $\inlets$ and the extravascular distance $\dist$.
%
Operationally, we design the three neural network architectures, in terms of width, depth, and nonlinearities, following \cite{Vitullo2024}. In particular, we rely on mesh-informed layers and hyperbolic tangent activations: for further details, we refer to \cite{Vitullo2024}.
\begin{figure}
    \centering
    \includegraphics[width=\textwidth]{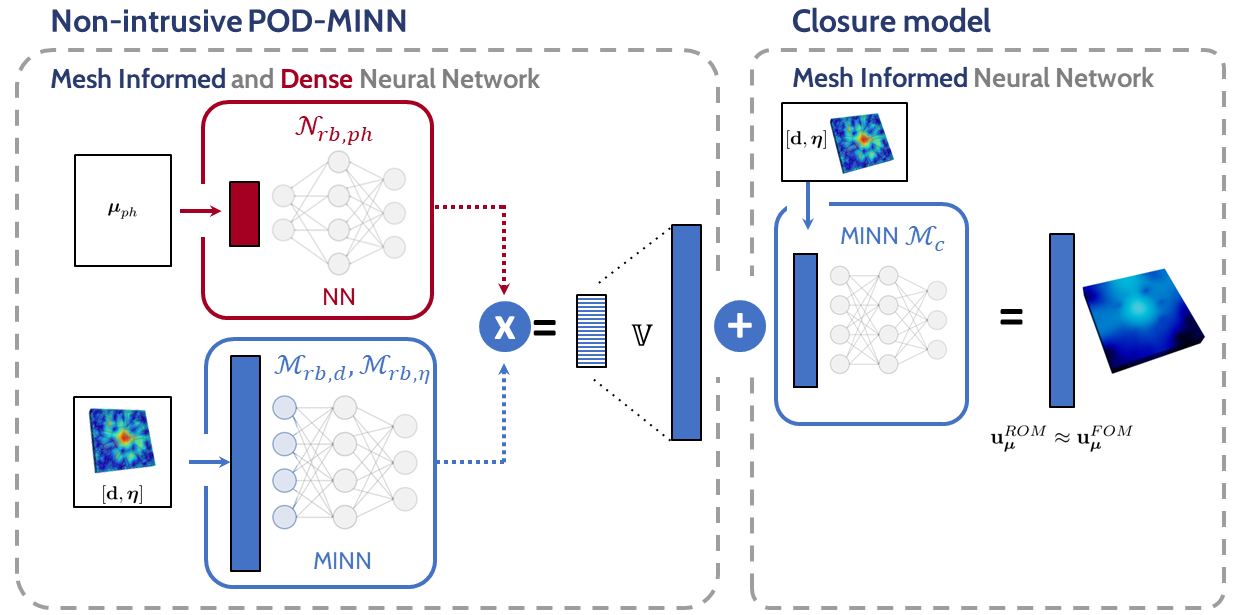}
    \caption{A sketch of the POD-MINN+ method. The macroscale parameters and the microscale ones are fed to two separate architectures, whose outputs are later combined to approximate the POD coefficients. The coefficients are then expanded over the POD basis, $\mathbb{V}$, and the ROM solution is further corrected with a closure term computed by a third network that accounts for the local features related to the high frequencies.} 
    \label{fig:POD-MINN+}
\end{figure}

We train the parameter-to-POD-coefficient network, $\mathcal{M}_{rb}$, by minimizing the loss function
\begin{equation*}
    \mathcal{E}(\mathcal{M}_{rb}) = \frac{1}{N_{train}} \sum_{i=1}^{N_{train}} \| \VV^T \ufom_i - \mathcal{M}_{rb}(\boldsymbol{\mu}_{i})\|.
\end{equation*}
To this end, we rely on the L-BFGS optimizer (default learning rate, no batching), combined with an ensamble learning strategy, i.e. by initializing and optimizing the network multiple times.  

The final step of the POD-MINN+ technique concerns the training of the closure model. Similarly to the case of POD coefficients, we process the information concerning the vascular graph using two distinct MINNs, $\mathcal{M}_{c, d}$ and $\mathcal{M}_{c,\eta}$, so that we can effectively isolate the individual impact of $\dist$ and $\inlets$. Mathematically speaking, we let
\begin{equation}
\mathcal{M}_c(\dist, \inlets) = \mathcal{M}_{c,\eta}(\inlets) \odot \mathcal{M}_{c,d}(\dist).
\end{equation}
Both architectures consist of a suitable combination of mesh-informed and dense layers: we refer to \cite{Vitullo2024} for further details about the design of $\mathcal{M}_{c, d}$ and $\mathcal{M}_{c,\eta}$.
%

In order to train the closure model, the following loss function is minimized:
\begin{align*}
\mathcal{E}(\mathcal{M}_c) = \frac{1}{N_{train}} \sum_{i=1}^{N_{train}} &\Bigg[ \regularization\| \ufom_i - \VV\mathcal{M}_{rb}(\boldsymbol{\mu}_i) - \mathcal{M}_c(\dist, \inlets) \|_{2,N_h} \\
& + (1-\regularization)\| \ufom_i - \VV\mathcal{M}_{rb}(\boldsymbol{\mu}_i) - \mathcal{M}_c(\dist, \inlets) \|_{\infty,N_h} \Bigg]
\end{align*}

Here, only the weights and biases associated with $\mathcal{M}_c$ undergo the optimization process, while those of $\mathcal{M}_{rb}$ are frozen. Notice also the hyperparameter $\regularization$, which controls the trade-off between $2$-norm and $\infty$-norm regularization. As before, we rely on the L-BFGS optimizer, combined with an ensamble learning strategy, for the minimization of the loss function.

\section{Application of the DL-MFMC method to oxygen transport and radiotherapy}
\label{sec:results}

\subsection{Computational setup for numerical simulations}

We implemented the FOM solver using a C++ based \emph{in-house} code \cite{Possenti2019b}, developed using the GetFem++ library \cite{renard:hal-02532422}. On our workstation, consisting of an AMD EPYC 7301 16-Core Processor with 2 sockets and 16 cores, each FOM simulation required roughly $w_0=25$ minutes, while the generation of a single vascular network took approximately $g=1$ second.
%
%
For the implementation of the ROM, instead, we rely on a custom Python library combining Pytorch and FEniCS. In this case, we conducted all trainings and evaluations on a Tesla V100-PCIE-32GB GPU accelerator.

\begin{table} [h!t]
	\begin{center}
		\caption{Prescribed values of input parameters in the comprehensive computational model for the high-fidelity approximation of the solution through the finite element method.}
		\begin{tabular}{c c c c c}
		\hline\hline
			\textbf{Symbol} & \textbf{Parameter} & \textbf{Unit} & \textbf{Value} & \textbf{Ref.\;\#} \\\hline
			$L$ & \small{characteristic length} & $m$ & $1 \cdot 10^{-3}$ & -- \\
			$R$ & \small{average radius} & $m$ & $4 \cdot 10^{-6}$ & \cite{Possenti2019b} \\
			$K$ & \small{tissue hydraulic conductivity} & $m^2$ & $1\cdot 10^{-18}$ & \cite{Koppl2020,Possenti2019b} \\
			$\mu_t$ & \small{interstitial fluid viscosity} & $cP$ & $1.2$ & \cite{Swartz2007} \\
			$\mu_v$ & \small{blood viscosity} & $cP$ & $3.0$ & \cite{Pries1994} \\
			$L_p$ & \small{wall hydraulic conductivity} & $m^{2}$\,s\,kg$^{-1}$ & $1 \cdot 10^{-12}$ & \cite{Possenti2019b}\\
			$\delta \pi$ & \small{oncotic pressure gradient} & $mmHg$ & 25 & \cite{Possenti2019b}  \\
			$\sigma$ & \small{reflection coefficient} & $-$ & $0.95$ & \cite{Levick1991} \\
			$D_v$ & \small{vascular diffusion coefficient} & $m^2/s$ & $2.18 \cdot 10^{-9}$ & \cite{Lucker2015H206} \\
			$N \cdot MCHC$ & \small{max. hemoglobin-bound $O_2$} & $\frac{mL_{O_2}}{mL_{RBC}} $ & $0.46$ & \cite{Secomb20041519} \\
			$\gamma$ & \small{Hill constant} & -- & $2.64$ & \cite{Welter2016,Lucker2015H206} \\
			$p_{s_{50}}$ & \small{$O_2$ at half-saturation } & $mmHg $ & $27$ & \cite{Welter2016,Moschandreou20111} \\
			$\alphaox$ & \small{$O_2$ solubility coefficient} & $\frac{mL_{O_2}/mL}{mmHg}$ &  $3.89 \cdot 10^{-5}$ & \cite{Secomb20041519} \\
			$D_t$ & \small{tissue diffusion coefficient} & $m^2/s$ & $2.41 \cdot 10^{-9}$ & \cite{Lucker2015H206} \\
			$C$ & \small{Characteristic $O_2$ concentration} & $\frac{mL_{O_2}}{mL_B}$ & $1.50 \cdot 10^{-3}$ & -- \\
	   \hline
           \hline
		\end{tabular}	
		\label{tab:parameters_fixed}
	\end{center}
\end{table}

The POD-MINN+ approach has been implemented projecting the discrete solution manifold over a trial linear subspace consisting of $k=10$ POD basis functions, as, given the diffusive nature of the problem, those were sufficient to capture the main global features of the PDE solutions. 
As discussed in Section \ref{subsec:podminn}, the training phase consisted of two distinct steps:
(i) the approximation of the POD reduced coefficients, and
(ii) the correction with the closure model.
During first phase, 
the neural networks were trained for 
at most $50$ epochs, before undergoing a lifting with respect to the original finite element space of dimension $N_h=1764$.
Then, in the second and final step, 
we trained the closure model for a total of $10$ epochs. 
The rationale behind this decision revolves around balancing the computational resources needed to construct the estimator with the intricate complexity of the neural network architecture defining the closure model. In fact, the considerable number of degrees of freedom in the closure map $\mathcal{M}_c$, stemming from the high-dimensionality of the input and output data, can pose significant challenges. Hence, it becomes essential to impose appropriate constraints on the training time to prevent over-fitting phenomena, and, consequently, a substantial degradation of the ROM as a whole.
For what concerns the loss function, 
we set the regularization parameter to $\regularization=0.75$, as that provides an acceptable fit. Refer to \cite{Vitullo2024} for more details on the empirical tests supporting this choice. 

We conclude with a final remark on the definition of the radiotherapy-related QoI that we considered for this study, that is, the Tumor Control Probability (TPC). As discussed in Section \ref{sec:qois}, we computed the TCP by leveraging on the tissue oxygen concentration map $C_t$ and the linear-quadratic model. Here, we initialized the latter using the values in Table \ref{tab:parameters_radio}, where the ratio $\alpha/\beta$ has been prescribed for a tumor tissue scenario. Finally, we set the radiation dose to $D=20 Gy$.

 
\begin{table} [h!t]
	\begin{center}
		\begin{tabular}{c c c c c}
		\hline\hline
			\textbf{Symbol} & \textbf{Parameter} & \textbf{Unit} & \textbf{Value} & \textbf{Ref.\;\#} \\\hline
		$D$ & \small{radiation dose} & $Gy$ & $20$ & -- \\
            $\alpha$ & \small{radiosensitivity parameter for 'single' hit} & $Gy^{-1}$ & $0.178$ & \cite{kelros71} \\
            $\beta$ & \small{radiosensitivity parameter  for 'multiple' hits} & $Gy^{-2}$  & $0.0455$ & \cite{kelros71} \\
            $\delta$ & \small{TIN parameter} & - & $1.38$ & \cite{Possenti2023.10.16.562646} \\
            $M$ & \small{TIN parameter} & - & $2.81$ & \cite{Possenti2023.10.16.562646} \\
            $a$ & \small{TIN parameter} & $keV/\mu m$  & $522.45$ & \cite{Possenti2023.10.16.562646} \\
            $b$ & \small{TIN parameter} &  $mmHg$ & $1.24$ & \cite{Possenti2023.10.16.562646} \\
            $N_c$ & \small{Clonogenic cells in the interstitial volume} & - & $10^8$ & \cite{delmonte2009} \\
            $LET$ & \small{Linear Energy Transfer in photons}&  $keV/\mu m$ & $2$& \cite{Durante2021} \\ \hline\hline
            \end{tabular}	
  		\caption{Input parameters values assigned in the linear-quadratic model to compute the TCP QoI.}
            \label{tab:parameters_radio}
	\end{center}
\end{table}

\subsection{Optimal management of the computational budget}

As we detailed in Section \ref{sec:optimal-policy}, the implementation of the DL-MFMC estimator entails a preliminary analysis (steps 2 - 5), necessary for the estimation of the optimal sample size $\nstar$, and the optimal sampling policy $m_0^*,m_1^*,\lambda_*.$ In particular, one of the first steps concerns the estimation of the trend coefficients $\alphareg, c_{1},c_{2},c_{3},c_{4}$, modeling the behavior of training times $t=t(n)$ and QoI correlations for varying sample size $\rho=\rho(n)$, cf. Assumptions \ref{a:trainingtime}-\ref{a:rho}. Of note, this analysis can be conducted on a small pool of FOM simulations, regardless of the computational budget $p$.

Here, we performed this preliminary analysis on a restricted pool of $n_0=300$ FOM simulations, recording the behavior of $t$ and $\rho$ for varying sample sizes, $40\le n_{1}<\dots n_{6}\le200<n_{0}$, with the $n_{j}$'s forming a uniform partition of $[40, 200]$. For each $n_{j}$ we trained the ROM multiple times as to obtain additional records for $t(n_{j})$ and $\rho(n_{j})$, with correlations being estimated over a test set of size $n_{0}-n_{j}$; then, we estimated the trend coefficients by fitting a suitable regression model. Results are in Figure \ref{fig:rho_regr} and \ref{fig:t_regr}.


\begin{figure}[htpb!]
\begin{center}
\begin{tabular}{ccc}
     \hspace{0.1\linewidth} $\overline{pO_2}$ \hspace{0.1\linewidth} 
     & \hspace{0.1\linewidth}  $\Delta pO_2$ \hspace{0.1\linewidth}  
     & \hspace{0.1\linewidth} $TCP$ \hspace{0.1\linewidth}  
\end{tabular}
\includegraphics[width=\textwidth]{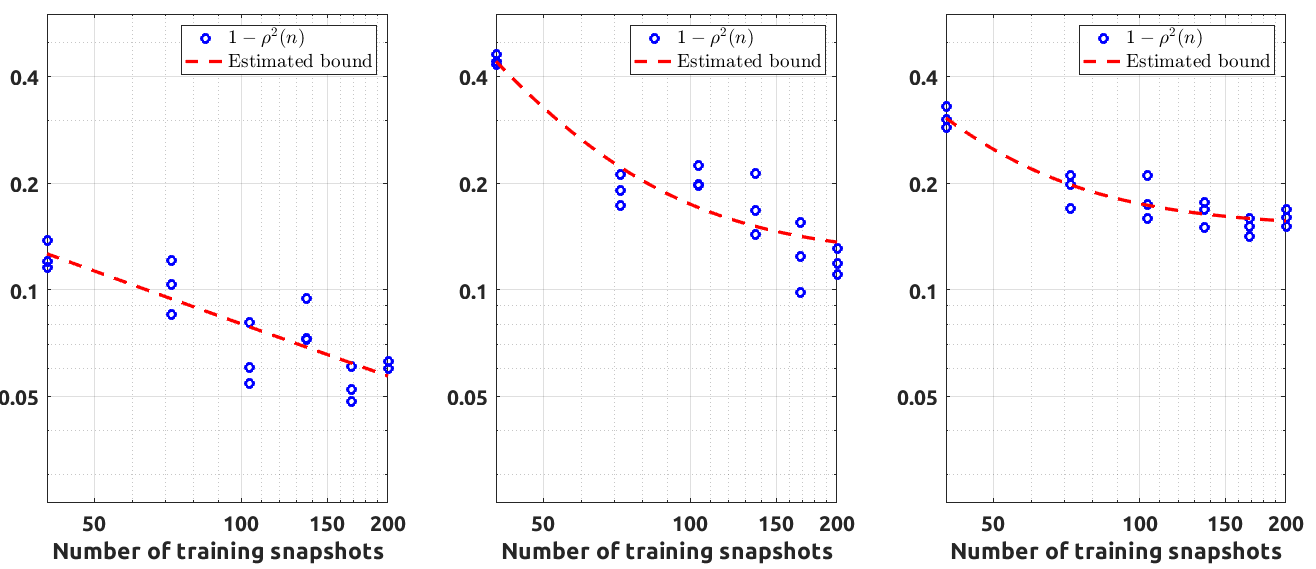}
\end{center}
\caption{Regression models for the law $1 - \rho^2(n_j) \leq c_1 n_j^{-\alphareg}+c_{2}$ for each QoI, $\overline{pO_2}$, $\Delta pO_2$, $TCP$, varying the sample sizes as $j=1,...,k$.} 
\label{fig:rho_regr}
\end{figure}


Among the three QoIs, the strongest correlation between FOM and ROM is observed for the average oxygen partial pressure, $\overline{pO_2}$. In general, this is not very surprising, in fact, deep learning-based ROMs are known to yield better approximations when it comes to smooth operators, see, e.g., \cite{FrancoROMPDE, franco2023approximation}.

On the other hand, the performances of the ROM are considerably more limited when considering 
less smooth operators, such as $\Delta pO_2$, or operators that result in nearly unimodal distributions, as the $TCP$. Further evidence of this claim lies in the estimates obtained for $c_{2}$, i.e., in the coefficient modeling the intrinsic correlation gap. In fact, we obtained higher values for
$\Delta pO_2$ and $TCP$, respectively $0.126$ and $0.151$, as opposed to the smaller estimate associated with $\overline{pO_2}$, where $c_2 = 0.003$. In particular, this goes to show that the ROM would perform better on smoother operators even when provided with an infinite amount of data.
Another interesting aspect concerns the estimated decay rate of $1 - \rho(n)^2$, namely $\alphareg$. In fact, 
although the functions settle to a higher limit value, the correlation between ROM and FOM for the variability of the partial pressure spatial and the TCP increases faster ($\alphareg \sim -2$) than the corresponding map related to the spatial average ($\alphareg \sim -1/2$).

Let us now discuss the results for the linear upper bound concerning the training time.
As shown in in Figure \ref{fig:t_regr}, the initialization of the ROM constitutes the most relevant source of computational cost. In fact, the actual computational time required for the optimization of the loss function is barely affected by the number of training samples $n$. The main reason for this lies in our design choice of limiting the number of training epochs below 50. Running the optimization process for a larger number of iterations would make the curve in Figure \ref{fig:t_regr} become steeper, with $c_{3}$ gaining dominance over $c_{4}$.
%
%

\begin{figure}[htpb!]
    \centering
    \includegraphics[width=0.65\textwidth]{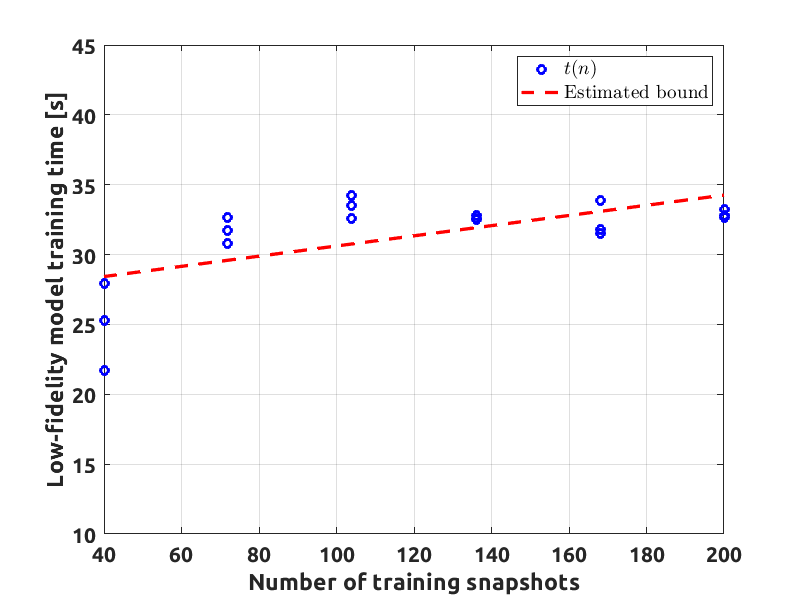}
    \caption{Regression model for the law describing the ROM training time $t(n_j) \leq c_{3} n_j+c_{4}$, varying the sample sizes as $j=1,...,k$.} 
    \label{fig:t_regr}
\end{figure}

After estimating the regression coefficients 
$\alphareg,c_1, c_2, c_3, c_4$, we focused on the retrieval of the optimal sample size $\nstar$, obtained by minimizing the upper bound in 
Lemma $\ref{lm:estimatebound}$. The results of this step, however, depend on the overall computational budget $p>0$, as that appears explicitly in the objective function to be minimized. For this reason, we postpone the discussion right below, together with the UQ analysis.

\subsection{Results of the uncertainty quantification analysis}
For the actual UQ analysis, we consider a variable computational budget of $p\in\{ 9, 12, 15, 18, 24\}$ hours.

As a first step, we leverage on the previous results to estimate the optimal sample size $n^*$, which we use to train the final ROM surrogate through the aforementioned POD-MINN+ approach. Subsequently, following steps 7-8 in the optimal policy pipeline, we estimate
the number of new high-fidelity and low-fidelity evaluations, $m_0^*$ and $m_1^*$, required for acceleration of QoI statistics and their variance reduction. Table \ref{tab:uq_samples} shows a synthetic overview of such analysis, reporting the results for three budgets of reference, namely 12, 18 and 24 hours.



\begin{table}[h!t]
    \centering
    \renewcommand{\arraystretch}{1.5} 
    \setlength{\tabcolsep}{7pt} 
    \begin{tabular}{l|lllll}
    \hline\hline
    \textbf{QoI} & \textbf{Budget} $p$  & $n^*$ & $m_0^*$  & $m_1^*$ & $\%\frac{n^*}{n^*+m_0^*}$
    \\
    \hline
    & $12\,h$& 136 & 232 & 8311 & 36.96$\%$ \\
    $\overline{pO_2}$& $18\,h$& 199 & 342 & 13522 & 36.78$\%$ \\
    & $24\,h$& 259 & 451 & 19082 & 36.48$\%$ \\
    \hline
    & $12\,h$& 126 & 266 & 6037 & 32.14$\%$\\
    $\Delta pO_2$& $18\,h$& 151 & 426  & 10053 & 26.17$\%$ \\
    & $24\,h$& 171 & 590 & 14202 & 22.47$\%$ \\
    \hline
    & $12\,h$& 98 & 292 & 6168 & 25.13 $\%$ \\
    $TCP$& $18\,h$ & 116 & 462  & 9996 & 20.07$\%$ \\
    & $24\,h$& 130 & 635 & 13914 & 16.99$\%$ \\
    \hline\hline
    \end{tabular}
    \caption{Number of FOM and ROM simulations employed for a UQ analysis of the oxygen transfer processes with the DL-MFMC estimator for three fixed reference computational budgets. In particular in the last column we report the percentage of FOM simulations required in the training phase.}
    \label{tab:uq_samples}
\end{table}

First of all, we note 
that the computational speed-up achieved through model order reduction and the short training time are effectively utilized: in fact, there is a difference of $1-2$ orders of magnitude between the number of reduced order model (ROM) and full order model (FOM) simulations. Additionally, we observe a significant distinction in the outcomes when comparing the three QoIs. Once again, the greater smoothness of the spatial-average operator, $\overline{pO_2}$, 
results in a increased reliance on the high-fidelity solver. The described trend is observed also when comparing the different allocation of FOM simulations between the training and the sampling phase, since we obtain higher percentages for $\overline{pO_2}$. As we shall see in a moment, these considerations will also be repeated in the analyses that follow.
\\\\
Given $m_0^*$ and $m_1^*$, we finally moved to the sampling phase, which allowed us to estimate the coupling coefficient $\lambda_*$ and eventually construct the DL-MFMC estimator. As we discussed in Section \ref{sec:dlmfmc}, our main interest is to provide a quantification of the uncertainties, and thus construct suitable confidence intervals based on our estimator. To appreciate the reduction in the uncertainties, we report the confidence intervals obtained by the DL-MFMC approach in comparison with those obtained via standard Monte Carlo (MC-FOM): we refer to Definition \ref{def:ic-dl-mfmc} and \ref{def:ic-mc-fom}, respectively. Hereon, the confidence level has ben set to $\gamma=99\%.$

To start, we report below the results obtained for $p=24h$, where the robustness of the DL-MFMC estimator is particularly evident.
%
\begin{align*}
\overline{pO_2}:&
\begin{cases}
    \imfmc & = 29.99 \,mmHg\, \pm 0.33 \,mmHg\, ,\\
    \ifom & = 30.09 \,mmHg\, \pm 0.58 \,mmHg\, .
\end{cases}
\\[10pt]
\Delta pO_2:&
\begin{cases}
    \imfmc & =  16.92 \,mmHg\, \pm 0.48 \,mmHg\, ,\\
    \ifom & =  17.55 \,mmHg\, \pm 0.75 \,mmHg\, .
\end{cases} 
\\[10pt]
TCP:&
\begin{cases}
    \imfmc & = 50.06 \% \pm 1.35 \% , \\
    \ifom & = 49.75 \% \pm 2.20 \% .
\end{cases}   
\end{align*}
In general, both approaches report consistent pointwise estimates. However, the DL-MFMC estimator entails a much smaller uncertainty when compared to the classical one based on standard Monte Carlo.
\\\\
Notably, the same behavior was also reported for the other computational budgets, as depicted in Figure \ref{fig:MC} and Table \ref{tab:uncertainties_ratio}.
%
In Figure \ref{fig:MC} we highlight for each QoI the trend of the point estimates $\DLMFMCestimator$ and $\FOMestimator$ with respect to the available computational budget $p$ and the contours of the uncertainties of the associated $99\%$ confidence intervals. The plots reveal various noteworthy aspects: 
\begin{itemize}
    \item[i)] the confidence intervals associated with the DL-MFMC estimator are consistently smaller than the ones obtained via standard Monte Carlo;\vspace{0.1cm}
    \item[ii)] in the DL-MFMC approach, the point estimate of $\mathbb{E}[Q(\ufom)]$ is more stable for $\overline{pO_2}$ than for $\Delta pO_2$ and $TCP$;\vspace{0.1cm}
    \item[iii)] the stability of the point estimate of $\overline{pO_2}$ is reached with a smaller budget $p$.
\end{itemize}
As before, (ii) and (iii) are easily
motivated by the additional regularity of the spatial average operator $\overline{pO_2}$, which a linear and continuous functional of $C_{t}$, as stated in Section \ref{subsec:fom}.\\\\

\begin{figure}
    \centering
    \includegraphics[width=0.49\textwidth]{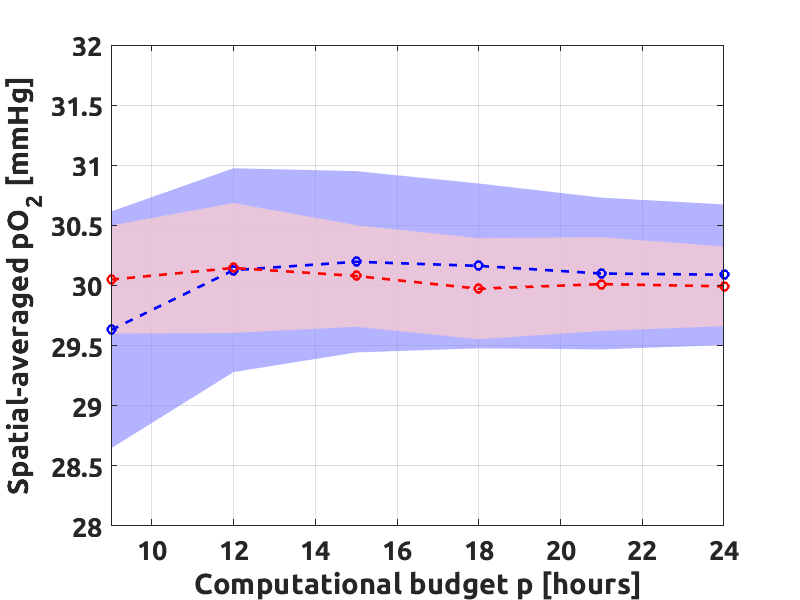}
    \includegraphics[width=0.49\textwidth]{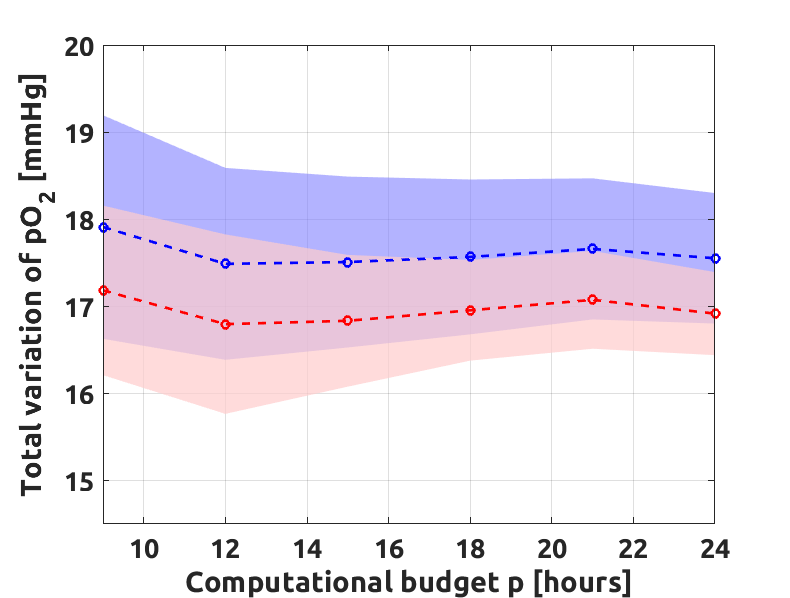}
    \\
    \includegraphics[width=0.49\textwidth]{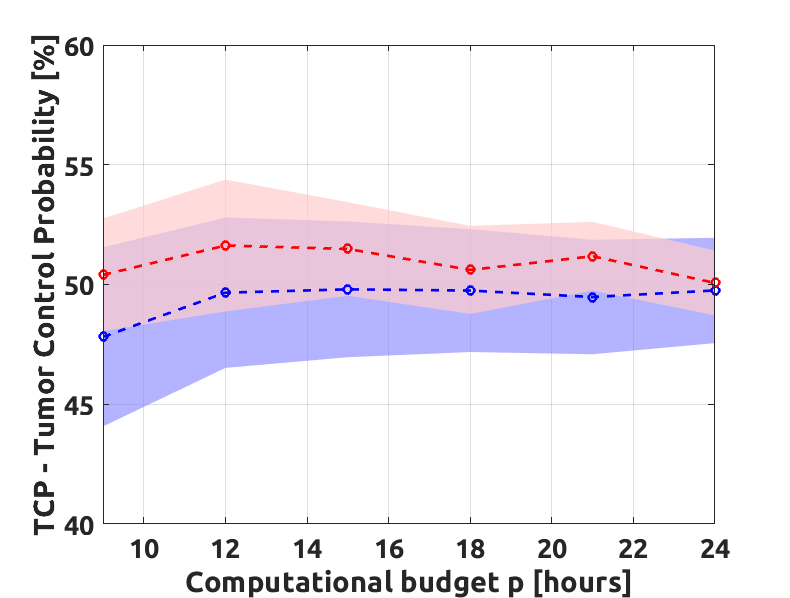}
    \\\;\\
    \includegraphics[width=0.8\textwidth]{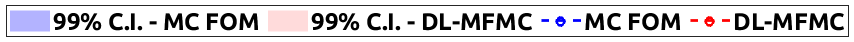}
    \caption{Confidence intervals estimates for all the QoIs, comparing the DL-MFMC estimator with the standard Monte-Carlo FOM-based, fixed $\gamma = 99\%$.} 
    \label{fig:MC}
\end{figure}

   \begin{table}
    \centering
    \renewcommand{\arraystretch}{1.5} 
    \setlength{\tabcolsep}{7pt} 
    


    \begin{tabular}{l|llll}
    \hline\hline
    \textbf{QoI} & \textbf{Budget} $p$ & \textbf{FOM simulations} & \textbf{Uncertainty} & 
    \\\hline
    & $12\,h$ & 368\;\;\;(-19.12\%) & 0.54 $mmHg$ &(-36.24\%)\\
    $\overline{pO_2}$& $18\,h$ &541\;\;\;(-20.79\%) & 0.42 $mmHg$ & (-38.68\%)\\
    & $24\,h$ &710\;\;\;(\textbf{\textcolor{ForestGreen}{-22.06\%}}) & 0.33 $mmHg$ & (\textbf{\textcolor{ForestGreen}{-43.41\%}})
    \\\hline
    & $12\,h$ &392\;\;\;(\textbf{\textcolor{red}{-13.85\%}}) & 1.03 $mmHg$ & (\textbf{\textcolor{red}{-6.50\%}})\\

    $\Delta pO_2$& $18\,h$ &577\;\;\;(-15.52\%) & 0.58 $mmHg$ & (-34.98\%)\\
    & $24\,h$ &761\;\;\;(-16.47\%) & 0.48 $mmHg$ & (-36.33\%)
    \\\hline
    & $12\,h$ & 390\;\;\;(-14.29\%) & 2.75\% & (-12.33\%)\\
    $TCP$& $18\,h$ &578\;\;\;(-15.37\%) & 1.84\% & (-28.30\%)\\
    & $24\,h$ &765\;\;\;(-16.03\%) & 1.35\% & (-38.44\%)\\     
    \hline\hline
    \end{tabular}

    \caption{Computational cost and UQ for the DL-MFMC estimator. FOM simulations = total number of high-fidelity simulations required by the computational pipeline, namely $\nstar+m_0^*$. Uncertainty = amplitude of the DL-MFMC confidence interval, Eq. \eqref{def:ic-dl-mfmc}. In parentheses, the comparison with the standard Monte Carlo estimator. E.g.: in the first row, we see that, compared to standard Monte Carlo, the DL-MFMC estimator reduced the uncertainty by 36.26\% while simultaneously requiring 19.12\% less FOM simulations.}
    %
    \label{tab:uncertainties_ratio}
\end{table}

\noindent These qualitative considerations are further confirmed by the quantitative analysis in
Table \ref{tab:uncertainties_ratio}. 
There, we focused on 
three reference budgets, $p=12, 18, 24$ hours, 
and we reported the uncertainties attained by the DL-MFMC approach (computed as the amplitude of the associated confidence interval), together with the total number of FOM simulations required by the computational pipeline (i.e., encompassing both the training and sampling phases). 
The results are reported in comparison with the MC-FOM approach, emphasizing the reduction in uncertainties and number of FOM evaluations.

In general, the advantage of the DL-MFMC approach becomes more and more pronounced for higher budgets. This is not particularly surprising if we consider that the ROM becomes more reliable when provided with a larger amount of data. In particular, as the correlation between FOM and ROM increases, the multi-fidelity approach starts to rely more and more on the surrogate model, ultimately reducing the number of FOM evaluations. Once again, this phenomenon is particularly evident for the average partial pressure, the QoI associated with the smoothest operator. However, it is interesting to note that, while the computational gain is nearly constant for $\overline{pO_2}$, the trend is much steeper for $\Delta pO_2$ and $TCP$. For instance, when increasing the computational budget, the reduction in the uncertainties for $\Delta pO_2$ goes from $-6.5\%$ to $-36.33\%$, highlight a substantial boost in the performances.
%
%

Further confirmation of this fact 
is provided in Figure \ref{fig:gain_budget}, where we compared computational resources with model uncertainties.
%
Coherently with our previous observation, we note 
that for $\overline{pO_2}$ there is a constant computational gap between the Monte Carlo and the DL-MFMC estimators. Instead, for QoIs associated to either less regular or strongly nonlinear operators, such as $\Delta pO_2$ and $TCP$, the DL-MFMC estimator becomes more advantageous when increasingly accurate estimates are required. Still, the improvement compared to a standard MC estimator remains consistent across a wide range of uncertainties. 

\begin{figure}
    \centering
    \begin{subfigure}{0.32\textwidth}
    \includegraphics[scale=0.35]{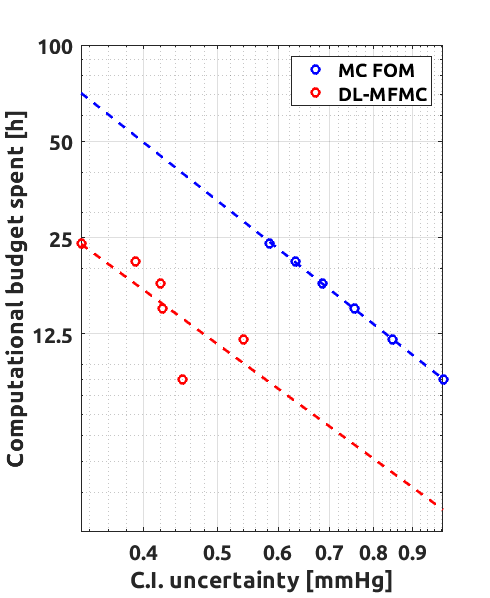}    
    \caption{$\overline{pO_2}$}
    \end{subfigure}
    \begin{subfigure}{0.32\textwidth}
    \includegraphics[scale=0.35]{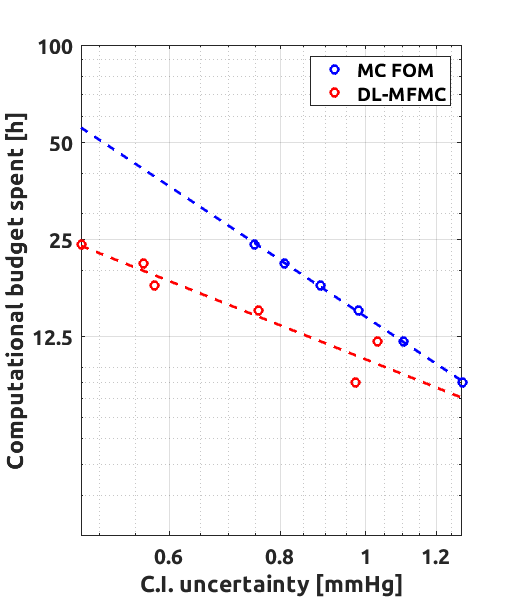}    
    \caption{$\Delta pO_2$}
    \end{subfigure}
    \begin{subfigure}{0.32\textwidth}
    \includegraphics[scale=0.35]{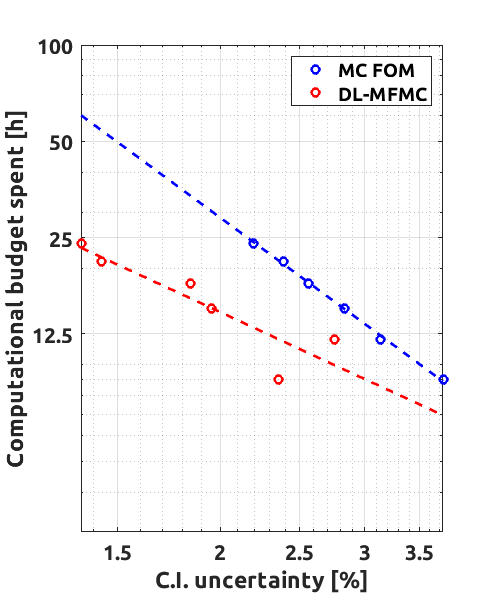}    
    \caption{$TCP$}
    \end{subfigure}
    \caption{Computational budget spent to achieve a fixed level of uncertainty for both the considered estimators, considering three different QoIs.} 
    \label{fig:gain_budget}
\end{figure}
\section{Conclusion}
\label{sec:conclusions}

This work addresses the significant computational demands associated with forward UQ for partial differential equations in multi-physics models. In particular, we propose an improvement of multi-fidelity methods \cite{peherstorfer2016optimal, peherstorfer2018survey, ng2014multifidelity}, integrating state-of-the-art deep learning techniques \cite{FrancoROMPDE,FrancoMINN} to increase the efficiency and robustness of predictions.

The primary contribution of our research lies in the development of the Deep Learning-Enhanced Multi-Fidelity Monte Carlo (DL-MFMC) method. This approach, building upon several foundational works in multi-fidelity methods and deep learning, introduces a novel synergy between full-order models (FOM) and reduced-order models (ROM), based on a supervised learning approach enhanced by deep neural networks. The result is a significant reduction in computational costs while retaining the critical features necessary for accurate modeling. By applying our DL-MFMC method to oxygen transfer in the microcirculation, using quantities of interest related to radiotherapy, we have demonstrated its ability to perform robust and reliable UQ analysis in complex, real-world scenarios. We have compared our method against traditional Monte Carlo approaches, demonstrating substantial speed-ups and increased robustness.

Looking ahead, further research should focus on refining the integration of deep learning with multi-fidelity methods, as well as improving the interpretability and scalability of these models remains a crucial area for continued development. In conclusion, our research contributes to the rapidly evolving field of computational modeling and uncertainty quantification in life sciences. The DL-MFMC approach offers a promising path forward in tackling the inherent challenges of computational expense and model accuracy in multi-scale, multi-fidelity scenarios.






\section*{Acknowledgments}
Paolo Zunino acknowledges the support of the grant MUR PRIN 2022 No. 2022WKWZA8 "Immersed methods for multiscale and multiphysics problems (IMMEDIATE)” part of the Next Generation EU program. The present research is part of the activities of project Dipartimento di Eccellenza 2023-2027, Department of Mathematics, Politecnico di Milano, funded by MUR, and of project Cal.Hub.Ria (Piano Operativo Salute, traiettoria 4), funded by MSAL. The authors are members of the Gruppo Nazionale per il Calcolo Scientifico (GNCS) of the Istituto Nazionale di Alta Matematica (INdAM).










Received xxxx 20xx; revised xxxx 20xx; early access xxxx 20xx.
\medskip

\end{document}